\DeclareMathOperator{\RE}{Re}
\DeclareMathOperator{\IM}{Im}
\newcommand{\ti}{\widetilde}
\newcommand{\la}{\lambda}
\newcommand{\ze}{\zeta}
\newcommand{\ity}{\infty}
\newcommand{\C}{\mathbb{C}}
\newcommand{\N}{\mathbb{N}}
\newcommand{\Z}{\mathbb{Z}}
\newcommand{\B}{\Big}
\newcommand{\F}{\mathfrak{F}}
\newcommand{\G}{\mathfrak{G}}
\numberwithin{equation}{section}
\newtheorem{theorem}{Theorem}[section]
\newtheorem{lemma}[theorem]{Lemma}
\newtheorem{corollary}[theorem]{Corollary}
\theoremstyle{remark}
\newtheorem{remark}[theorem]{Remark}
\newtheorem{example}[theorem]{Example}
\newtheorem{definition}[theorem]{Definition}
\thanks {The research work of the first  author is supported by research fellowship from Council of Scientific and Industrial Research (CSIR), New Delhi.}
\begin{document}
\title[semigroups of transcendental functions]{the dynamics of semigroups of transcendental entire functions II}
\author[D. Kumar]{Dinesh Kumar}
\address{Department of Mathematics, University of Delhi,
Delhi--110 007, India}

\email{dinukumar680@gmail.com }
\author[S. Kumar]{Sanjay Kumar}

\address{Department of Mathematics, Deen Dayal Upadhyaya College, University of Delhi,
New Delhi--110 015, India }

\email{sanjpant@gmail.com}

\begin{abstract}
We introduce the concept of escaping set for semigroups of transcendental entire functions using Fatou-Julia theory. Several results of the  escaping set associated with the iteration of one transcendental entire function have been extended to transcendental semigroups. We also investigate the properties of escaping sets for conjugate semigroups and abelian transcendental semigroups. Several classes of transcendental semigroups for which Eremenko's conjecture holds have been provided.
\end{abstract}

\keywords{Escaping set, normal family, postsingular set, bounded type, hyperbolic}

\subjclass[2010]{37F10, 30D05}

\maketitle

\section{introduction}\label{sec1}

 Let $f$ be a transcendental entire function. For $n\in\N$ let $f^n$ denote the $n$-th iterate of $f.$ The set $F(f)=\{z\in\C : \{f^n\}_{n\in\N}\,\text{ is normal in some neighborhood of}\, z\}$ is called the Fatou set of $f,$ or the set of normality of $f$ and its complement $J(f)$ is called the Julia set of $f$. The escaping set of $f$ denoted by $I(f)$ is the set of points in the complex plane that tend to infinity under iteration of $f$. In general, it is neither an open nor a closed subset of $\C$ and has interesting topological properties.  For a transcendental entire function $f,$ the escaping set was studied for the first time by Eremenko in \cite {e1} who proved that
\begin{enumerate}
\item\ $I(f) \neq\emptyset;$
\item\ $J(f)=\partial I(f);$
\item\ $I(f)\cap J(f)\neq\emptyset;$
\item\ $\overline{I(f)}$ has no bounded components.
\end{enumerate}
In the same paper he stated the following conjectures:
\begin{enumerate}
\item[(i)] Every component of $I(f)$ is unbounded;
\item[(ii)] Every point of $I(f)$ can be connected to $\ity$ by a curve consisting of escaping points.
\end{enumerate}

For the exponential maps of the form $f(z)=e^z+\la$ with $\la>-1,$ it is known, by Rempe \cite{R2}, that the escaping set is a connected subset of the plane, and for $\la<-1,$ it is the disjoint union of uncountably many curves to infinity, each of which is connected component of $I(f)$ \cite{R4}. (Moreover, these maps have no critical points and exactly one asymptotic value which is the omitted value $\la$).
It was shown in ~\cite{SZ} that every escaping point of every exponential map can be connected to $\ity$ by a curve consisting of escaping points.

Two functions $f$ and $g$ are called permutable if $f\circ g=g\circ f.$
Fatou  \cite{beardon}  proved that if $f$ and $g$ are two rational functions which are permutable, then $F(f)=F(g)$. This was the most famous result  that motivated the dynamics of composition of complex functions. Similar results for transcendental entire functions is still not known, though it holds in some very special cases   \cite[Lemma 4.5]{baker2}.
If $f$ and $g$ are transcendental entire functions, then so is $f\circ g$ and $g\circ f$ and the dynamics of one composite entire function helps in the study of the dynamics of the other and vice-versa. 
If $f$ and $g$ are transcendental entire functions, the dynamics of $f\circ g$ and $g\circ f$ are very similar. In  \cite{berg4, poon2'}, it was shown $f\circ g$ has  wandering domains if and only if $g\circ f$ has  wandering domains. In \cite{dinesh3}   the authors have constructed several  examples where the dynamical behavior of $f$ and $g$ vary greatly from the dynamical behavior of $f\circ g$ and $g\circ f.$ Using approximation theory of entire functions, the authors have shown the existence of entire functions $f$ and $g$ having infinite number of domains satisfying various properties and relating it to their compositions. They explored and enlarged all the maximum possible ways of the solution in comparison to the past result worked out. 

In \cite{dinesh2}, the authors considered the relationship between Fatou sets and singular values (to be defined in section \ref{sec2'})   of transcendental entire functions $f, g$ and $f\circ g$. They gave various conditions under which Fatou sets of $f$ and $f\circ g$ coincide and also considered relation between the singular values of $f, g$ and their compositions. 

A natural extension of the dynamics associated to the iteration of a complex function is the dynamics of composite of two or more such functions and this leads to the realm of semigroups of rational and transcendental entire functions. In this direction the seminal work was done by Hinkkanen and Martin \cite{martin} related to semigroups of rational functions. In their  paper, they extended the classical theory of the dynamics  associated to the iteration of a rational function of one complex variable to the more general setting of an arbitrary semigroup of rational functions. Many of the results were extended to semigroups of transcendental entire functions in \cite{cheng}, \cite{dinesh1}, \cite{poon1} and \cite{zhigang}.
In \cite{dinesh1}, the authors generalised  the dynamics of a transcendental entire function on its Fatou set to the dynamics of semigroups of transcendental entire functions. They also investigated the dynamics of conjugate semigroups, abelian transcendental semigroups and wandering and Baker domains of transcendental semigroups.   

A transcendental semigroup $G$ is a semigroup generated by a family of transcendental entire functions $\{f_1,f_2,\ldots\}$ with the semigroup operation being functional composition. We denote the semigroup by $G=[f_1,f_2,\ldots].$ Thus each $g\in G$ is a transcendental entire function and $G$ is closed under composition. For an introduction to iteration theory of entire functions, see \cite {berg1}.

A family $\F$ of meromorphic functions is normal in a domain $\Omega\subset\C$ if every sequence in $\F$ has a subsequence which converges locally uniformly in $\Omega$ to a meromorphic function or to the constant $\ity.$ 
The set of normality or the Fatou set $F(G)$ of a transcendental semigroup $G$, is the largest open subset of $\C$ on which the family of functions in $G$ is normal. Thus $z_0\in F(G)$ if it has a neighborhood $U$ on which the family $\{g:g\in G\}$ is  normal. The Julia set $J(G)$ of $G$ is the complement of $F(G),$ that is, $J(G)=\ti\C\setminus F(G).$ The semigroup generated by a single function $f$ is denoted by $[f].$ In this case we denote $F([f])$ by $F(f)$ and $J([f])$ by $J(f)$ which are the respective Fatou set and Julia set in the classical Fatou-Julia theory of iteration of a single transcendental entire function. The dynamics of a semigroup is more complicated than those of a single function. 
 For instance, $F(G)$ and $J(G)$ need not be completely invariant and $J(G)$ may not be the entire complex plane $\C$ even if $J(G)$ has an interior point  \cite{martin}.

 In this paper we have generalized  the dynamics of a transcendental entire function on its escaping set to the dynamics of semigroups of transcendental entire functions on their escaping set. For a transcendental semigroup $G$, we have initiated the study of escaping sets of semigroups of transcendental entire functions, to be denoted by $I(G).$ Some of the properties of the escaping set from the classical dynamics do not get preserved for the semigroups. For instance, $I(G)$ is only forward invariant and it could be an empty set also. Furthermore, the escaping sets of conjugate semigroups and abelian transcendental semigroups   have also been investigated. Several important classes of transcendental semigroups for which Eremenko's conjecture holds have been provided. Recall that the postsingular set of an entire function $f$ is defined as 
\[\mathcal P(f)=\overline{\B(\bigcup_{n\geq 0}f^n(\text{Sing}(f^{-1}))\B)}.\]
 The entire function $f$ is called \emph{postsingularly bounded} if $\mathcal P(f)$ is bounded. The function  $f$ is said to be \emph{postsingularly finite} if each of its singular value has a finite orbit, which means that each of the singular orbit is preperiodic and  $f$ is called \emph{hyperbolic} if the postsingular set $\mathcal P(f)$ is a compact subset of $F(f).$ Using these definitions, we have given definitions for \emph{postsingularly bounded}, \emph{postsingularly finite} and \emph{hyperbolic} transcendental semigroups.

\section{escaping set of a transcendental semigroup}\label{sec2}
We introduce the definition of escaping set for semigroups of transcendental entire functions.
\begin{definition}\label{sec2,defn'''''}
Let  $G=[g_1,g_2,\ldots]$  be a  transcendental semigroup. The escaping set of $G,$ denoted by $I(G)$ is defined as
\[I(G)=\{z\in\C |\,\text{every sequence in}\,G\,\text{has a subsequence which diverges to infinity at}\, z\}.\]
\end{definition}
Thus $I(G)\subset I(g)$ for all $g\in G$ and so $I(G)\subset\bigcap_{g\in G}I(g).$ Since $\bigcap_{g\in G}I(g)$ can be an empty set, so for an arbitrary semigroup $G$ of transcendental entire functions, $I(G)$ could be an empty set. We illustrate this with a theorem, but we first prove two lemmas:

\begin{lemma}\label{sec2,lemab}
 Consider the two parameter family of functions $\mathcal F=\{f_{{\la},{\xi}}(z)= e^{-z+\la}+\xi: \la,\,\xi\in\C, \RE{\la}<0, \RE\,\xi\geq 1\}.$
Denote $f_{{\la},{\xi}}$ by $f$ itself for simplicity. Then for each $f\in\mathcal F,$ $\displaystyle{I(f)\subset\{z=x+iy : x<0, (4k-3)\frac{\pi}{2}<y<(4k-1)\frac{\pi}{2}, k\in\Z\}.}$
\end{lemma}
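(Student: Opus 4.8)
The claim is that for $f(z)=e^{-z+\lambda}+\xi$ with $\operatorname{Re}\lambda<0$ and $\operatorname{Re}\xi\ge 1$, the escaping set is contained in the left half-plane intersected with the horizontal strips $(4k-3)\pi/2<\operatorname{Im}z<(4k-1)\pi/2$. The natural strategy is to show that the complement of this set is mapped, under one application of $f$, into a region from which iterates cannot escape — in fact into a region $R$ with $R\subset f^{-1}(\text{bounded set})$ eventually, or more simply into the right half-plane $\{\operatorname{Re}z\ge 1\}$, which $f$ maps into a bounded set. So the key is to understand $f$ on the "bad" set $B=\{z : \operatorname{Re}z\ge 0\}\cup\{z=x+iy : \operatorname{Re}z<0,\ \text{but}\ y\ \text{not in any strip}\}$, i.e. where either $x\ge 0$ or $\cos(\text{something})\le 0$.

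First I would write $f(z)=e^{-x+\operatorname{Re}\lambda}\,e^{i(-y+\operatorname{Im}\lambda)}+\xi$, so that
\[
\operatorname{Re}f(z)=e^{\operatorname{Re}\lambda-x}\cos(y-\operatorname{Im}\lambda)+\operatorname{Re}\xi .
\]
The two cases to handle are: (a) $\operatorname{Re}z=x\ge 0$; then $e^{\operatorname{Re}\lambda-x}\le e^{\operatorname{Re}\lambda}<1$, so $|e^{-z+\lambda}|<1$ and hence $f(z)$ lies in the disk of radius $1$ about $\xi$, in particular $\operatorname{Re}f(z)>\operatorname{Re}\xi-1\ge 0$, and moreover $f(z)$ is in a bounded set. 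Iterating, the orbit stays in a bounded set, so $z\notin I(f)$. (b) $x<0$ but $y$ lies outside all the strips, i.e. $\cos(y-\operatorname{Im}\lambda)\le 0$ — here I need the strips to be centered so that "in a strip" $\Leftrightarrow$ $\cos(y-\operatorname{Im}\lambda)>0$; one should check the normalization in the statement matches $\operatorname{Im}\lambda$ appropriately, or more likely the intended reading uses $\operatorname{Im}\lambda$ absorbed/zero and the strips are exactly where $\cos y>0$. Assuming that, when $\cos(y-\operatorname{Im}\lambda)\le 0$ we get $\operatorname{Re}f(z)\le\operatorname{Re}\xi$... that's the wrong direction, so instead one wants $\operatorname{Re}f(z)\ge\operatorname{Re}\xi\ge 1$ — this happens when $\cos\ge 0$. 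Let me reconsider: the complement of the target set where $x<0$ should be where $\cos\le 0$, giving $\operatorname{Re}f(z)=e^{\operatorname{Re}\lambda-x}\cos(\cdots)+\operatorname{Re}\xi$; if $\cos\le0$ this is $\le\operatorname{Re}\xi$ but could be very negative since $e^{-x}\to\infty$. So that alone is not enough — one must then look at the \emph{second} iterate or argue that points with very negative real part and $\cos<0$ map to points with real part $\ge 1$ only after controlling things more carefully.

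So the real argument is: if $z=x+iy\in I(f)$, then $f^n(z)\to\infty$, which forces $\operatorname{Re}f^n(z)\to+\infty$ (since if $\operatorname{Re}f^n(z)$ stays bounded above along a subsequence, say $\le M$, then the next iterate has modulus $\le e^{\operatorname{Re}\lambda - \operatorname{Re}f^n(z)}+|\xi|$, and combined with the dichotomy this prevents escape — this needs the observation that $f$ maps $\{\operatorname{Re}z\le M\}$... hmm, no: $\{\operatorname{Re}z\le M\}$ is mapped to a set that can be unbounded). The clean way: show $I(f)\subseteq\{\operatorname{Re}z<0\}$ first (case (a) above handles $\operatorname{Re}z\ge 0$ directly — such a point maps into a bounded set, stays there, never escapes). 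Then for $\operatorname{Re}z<0$: if $z\in I(f)$ then $f(z)\in I(f)$ too, so $\operatorname{Re}f(z)<0$, i.e. $e^{\operatorname{Re}\lambda-x}\cos(y-\operatorname{Im}\lambda)<-\operatorname{Re}\xi\le -1<0$, which forces $\cos(y-\operatorname{Im}\lambda)<0$ — wait, that puts $z$ \emph{outside} the strips, the opposite of the claim. I suspect the map in the paper is $e^{-z+\lambda}$ with the sign of the exponent making the strips come out as stated, or the strips in the statement are exactly the set where $\cos$ is \emph{negative}; in any case, the mechanism is: $z\in I(f)\Rightarrow f(z)\in I(f)\Rightarrow\operatorname{Re}f(z)<0\Rightarrow$ the sign of $\cos(\operatorname{Im}z-\operatorname{Im}\lambda)$ is pinned down, and pinning it down is exactly the strip condition. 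I would carry this out carefully matching the normalization $(4k-3)\pi/2<y<(4k-1)\pi/2$ to the condition $\operatorname{Re}(e^{-z+\lambda})\,$has the right sign, i.e. $\cos(y-\operatorname{Im}\lambda)$ of the appropriate sign, together with the trivial observation that $\operatorname{Re}z\ge 0$ points don't escape.

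\textbf{Expected main obstacle.} The routine part is the real-part computation; the delicate point is handling $z$ with $\operatorname{Re}z<0$ but $z$ outside the strips and checking that such $z$ genuinely fails to escape — it is \emph{not} immediate because $|e^{-z+\lambda}|$ is large there, so $f(z)$ has large modulus and one worries it might escape. The resolution is forward invariance of $I(f)$: one only needs that $z\in I(f)$ implies $\operatorname{Re}f(z)$ has the correct sign (because $f(z)\in I(f)$, and we will have shown $I(f)\subseteq\{\operatorname{Re}<0\}$), and then read off the strip condition on $z$ from the sign of $\operatorname{Re}f(z)=e^{\operatorname{Re}\lambda-x}\cos(\operatorname{Im}z-\operatorname{Im}\lambda)+\operatorname{Re}\xi$. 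So the logical order is: (1) show $\operatorname{Re}z\ge 0\Rightarrow z\notin I(f)$, hence $I(f)\subseteq\{\operatorname{Re}z<0\}$; (2) for $z\in I(f)$, apply this to $f(z)\in I(f)$ to get $\operatorname{Re}f(z)<0$; (3) deduce $\cos(\operatorname{Im}z-\operatorname{Im}\lambda)<0$ (using $\operatorname{Re}\xi\ge 1>0$ and $e^{\operatorname{Re}\lambda-x}>0$), which translates to the strip membership after matching conventions. The one thing I would double-check against the paper is whether the strips as written correspond to $\cos<0$ or $\cos>0$ and whether $\operatorname{Im}\lambda$ should appear in the strip bounds — most likely the paper is implicitly taking the principal configuration where these line up.
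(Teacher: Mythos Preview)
Your approach is correct and follows the same overall decomposition as the paper: first show that the closed right half-plane is disjoint from $I(f)$, then use forward invariance of $I(f)$ to pin down the sign condition on $\operatorname{Re}(e^{-z+\lambda})$ and hence the strip membership of $\operatorname{Im}z$. The paper proves the right-half-plane step differently: instead of your direct observation that $\{\operatorname{Re}z\ge 0\}$ maps under $f$ into the bounded disk $D(\xi,1)\subset\{\operatorname{Re}z>0\}$ (so the orbit stays there forever), it argues by contradiction and backward induction --- assuming $|f^n(z)|>1+|\xi|$ for some $n$, it peels off iterates to force $\operatorname{Re}f^{n-1}(z)<0$, then $\operatorname{Re}f^{n-2}(z)<0$, and so on down to $\operatorname{Re}f(z)<0$, contradicting $\operatorname{Re}f(z)\ge 1$. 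Your forward argument is shorter and more transparent. For the left-half-plane part, both you and the paper use the same idea: the paper simply asserts that $S_1=\{z:\operatorname{Re}z<0,\ \operatorname{Re}(e^{-z+\lambda})>0\}$ does not intersect $I(f)$, which unpacks to exactly your steps (2)--(3) via $f(S_1)\subset\{\operatorname{Re}z>0\}$. You are also right to flag the $\operatorname{Im}\lambda$ issue: the paper's own proof writes the relevant strip bounds with a $+\operatorname{Im}\lambda$ shift, while the lemma's statement omits it; your derivation makes clear that the shift should be there.
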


\begin{proof}
 Consider the set $S=\{z\in\C: \RE z >0, \RE(e^{-z+\la})>0,\,\la\in\C,\,\text{with}\,\RE\la<0\}.$ Observe that $\RE(e^{-z+\la})> 0,$ implies $\dfrac{(4k-1)\pi}{2}+\IM\la<y<\dfrac{(4k+1)\pi}{2}+\IM\la, k\in\Z.$ Thus the set $S$ is the entire right half plane $\{z:\RE z>0\}.$ We shall show that  no point in $S$ escapes to $\ity$ under iteration of each $f\in\mathcal F.$ For  this we show  $|f^k(z)|\leq 1+|\xi|$ for all $k\in\N,\, z\in S.$  Suppose on the contrary that  there exist $n\in\N$ and $z\in S$ such that $|f^n(z)|>1+|\xi|.$ Now $|f(f^{n-1}(z))|> 1+|\xi|,$  implies $1+|\xi|<|e^{-f^{n-1}(z)+\la}+\xi|.$ This shows that $e^{-\RE f^{n-1}(z)+\RE\la}>1$, and as $\RE\la<0,$ we obtain  $-\RE f^{n-1}(z)>0,$ that is, $\RE f^{n-1}(z)<0.$  Further this implies that $\RE(f(f^{n-2}(z)))<0,$ that is, $\RE(e^{-f^{n-2}(z)+\la}+\xi)<0,$ which implies  $-\RE(e^{-f^{n-2}(z)+\la})> \RE\xi\geq 1$ and since $|z|\geq -\RE z$ for all $z\in\C,$ we get $|(e^{-f^{n-2}(z)+\la})|>1,$ that is,  $e^{-\RE f^{n-2}(z)+\RE\la}>1$ and so  $\RE f^{n-2}(z)<0.$ By induction  we will get $\RE f(z)<0.$ But $\RE f(z)=\RE(e^{-z+\la})+\RE\xi>{0+1}=1,$  so we arrive at  a contradiction and therefore proves the assertion. Also no point on the imaginary axis $\{z: \RE z=0\}$ can escape to infinity under iteration of any $f\in\mathcal F.$ Moreover, the set $S_1=\{z\in\C: \RE z<0, \RE(e^{-z+\la})>0\}$ does not intersect $I(f)$ for any $f\in\mathcal F.$ Hence for each $f\in\mathcal F,$ 
$\displaystyle{I(f)\subset\{z=x+iy : x<0, (4k-3)\frac{\pi}{2}<y<(4k-1)\frac{\pi}{2}, k\in\Z\}.}$
\end{proof}

\begin{lemma}\label{sec2,lemab'}
Consider the two parameter family of functions $\mathcal F'=\{f_{{\mu},{\ze}}(z)= e^{z+\mu}+\ze: \mu,\,\ze\in\C, \RE{\mu}<0, \RE\,\ze\leq-1\}.$
Denote $f_{{\mu},{\ze}}$ by $f$ itself for simplicity. Then for each $f\in\mathcal F,'$ $\displaystyle{I(f)\subset\{z=x+iy : x>0, (4k-1)\frac{\pi}{2}<y<(4k+1)\frac{\pi}{2}, k\in\Z\}.}$
\end{lemma}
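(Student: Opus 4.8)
The plan is to follow the strategy of Lemma~\ref{sec2,lemab}: I will show that every point lying outside the asserted region has a bounded forward orbit under $f=f_{\mu,\ze}$, and hence cannot lie in $I(f)$ (a point with bounded forward orbit does not escape).

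First I would establish that the closed left half-plane $H=\{z\in\C:\RE z\leq 0\}$ is mapped by $f$ into the open left half-plane, with bounded orbits. Indeed, if $\RE z\leq 0$ then $|e^{z+\mu}|=e^{\RE z+\RE\mu}<1$, because $\RE\mu<0$; consequently $\RE f(z)=\RE(e^{z+\mu})+\RE\ze<1+\RE\ze\leq 0$ and $|f(z)|\leq |e^{z+\mu}|+|\ze|<1+|\ze|$. By induction $f^n(z)\in H$ and $|f^n(z)|<1+|\ze|$ for every $n\geq 1$, so $z\notin I(f)$; in particular no point with $\RE z\leq 0$ escapes (this includes the imaginary axis).

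Next I would dispose of the points in the open right half-plane lying in the ``wrong'' strips, that is, those $z$ with $\RE z>0$ but $\RE(e^{z+\mu})\leq 0$. For such $z$ one has $\RE f(z)=\RE(e^{z+\mu})+\RE\ze\leq\RE\ze\leq -1<0$, so $f(z)$ lands in the open left half-plane; by the previous step the orbit of $f(z)$, hence of $z$, is bounded, and $z\notin I(f)$. Combining the two cases yields $I(f)\subset\{z\in\C:\RE z>0,\ \RE(e^{z+\mu})>0\}$. Writing $z=x+iy$, the condition $\RE(e^{z+\mu})>0$ is $\cos(y+\IM\mu)>0$, equivalently $(4k-1)\frac{\pi}{2}<y+\IM\mu<(4k+1)\frac{\pi}{2}$ for some $k\in\Z$, which is exactly the family of strips in the statement (the vertical translate by $\IM\mu$ being suppressed, just as the analogous shift by $\IM\la$ is in Lemma~\ref{sec2,lemab}); the strict inequalities are legitimate since on the boundary $\RE(e^{z+\mu})=0$ with $\RE z>0$ one again gets $\RE f(z)=\RE\ze\leq -1<0$.

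I do not expect a genuine obstacle; the one point that needs care is the \emph{direction} of the real-part estimates. Since the exponent here is $+z$ (not $-z$ as in Lemma~\ref{sec2,lemab}) and $\RE\ze\leq -1$ (not $\geq 1$), the roles of the two half-planes, and of the escaping strips, are mirrored, so one must verify that it is still the hypothesis $\RE\mu<0$ that forces $|e^{z+\mu}|<1$ on the relevant half-plane. As an alternative, one may observe that the involution $\phi(z)=-z$ conjugates $f_{\mu,\ze}$ to $-f_{\mu,\ze}(-z)=e^{-z+(\mu+i\pi)}+(-\ze)=f_{\mu+i\pi,\,-\ze}$, which belongs to $\mathcal F$ because $\RE(\mu+i\pi)=\RE\mu<0$ and $\RE(-\ze)=-\RE\ze\geq 1$; since $I(\phi^{-1}\circ f\circ\phi)=-I(f)$, the desired inclusion follows from Lemma~\ref{sec2,lemab} applied to $f_{\mu+i\pi,-\ze}$ by negating the strips and reindexing (the extra $i\pi$ accounting for the half-period translation between the two strip families). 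I would present the direct computation as the main proof and record the conjugation as a remark.
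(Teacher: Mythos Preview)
Your proposal is correct and follows essentially the same route as the paper: both arguments show that the closed left half-plane has orbits bounded by $1+|\ze|$ (you by a clean forward induction, the paper by the mirror-image contradiction argument of Lemma~\ref{sec2,lemab}), and that points in the right half-plane with $\RE(e^{z+\mu})\leq 0$ are sent into the left half-plane after one step. Your conjugation remark via $\phi(z)=-z$ is a nice addition not present in the paper.
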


\begin{proof}
 Consider the set $S'=\{z\in\C: \RE z <0, \RE(e^{z+\mu})<0,\,\mu\in\C\,\text{with}\,\RE\mu<0\}.$ Observe that $\RE(e^{z+\mu})< 0,$ implies $\dfrac{(4k-3)\pi}{2}-\IM\,\mu<y<\dfrac{(4k-1)\pi}{2}-\IM\mu, k\in\Z.$ Thus  the set $S'$ is the entire left half plane $\{z:\RE z<0\}.$ We shall show that no point in $S'$ escapes to $\ity$ under iteration of each $f\in\mathcal F'.$ For  this we show  $|f^k(z)|\leq 1+|\ze|$ for all $k\in\N,\,z\in S'.$  Suppose on the contrary that there exist $n\in\N$ and $z\in S'$ such that $|f^n(z)|>1+|\ze|.$
Now $|f(f^{n-1}(z))|> 1+|\ze|,$  implies $1+|\ze|<|e^{f^{n-1}(z)+\mu}+\ze|.$ This shows that $e^{\RE f^{n-1}(z)+\RE\mu}>1$, and as $\RE\mu<0,$ we obtain  $\RE f^{n-1}(z)>0.$   Further this implies that $\RE(f(f^{n-2}(z)))>0,$ that is, $\RE(e^{f^{n-2}(z)+\mu}+\ze)>0,$ which implies $\RE(e^{f^{n-2}(z)+\mu})>-\RE\ze\geq 1$  and since $|z|\geq \RE z$ for all $z\in\C,$ we get $|(e^{f^{n-2}(z)+\mu})|>1,$ that is,  $e^{\RE f^{n-2}(z)+\RE\mu}>1$ and so $\RE f^{n-2}(z)>0.$ By induction  we will get $\RE f(z)>0.$ But $\RE f(z)=\RE(e^{z+\mu})+\RE\ze<{0-1}=-1,$  so we arrive at  a contradiction and therefore proves the assertion. Also no point on the imaginary axis $\{z: \RE z=0\}$  can escape to infinity under iteration of any $f\in\mathcal F'.$ Moreover, the set $S_1'=\{z\in\C: \RE z>0, \RE(e^{z+\mu})<0\}$ does not intersect $I(f)$ for any $f\in\mathcal F'.$ Hence for each $f\in\mathcal F',$
$\displaystyle{I(f)\subset\{z=x+iy : x>0, (4k-1)\frac{\pi}{2}<y<(4k+1)\frac{\pi}{2}, k\in\Z\}.}$
\end{proof}
\begin{remark}\label{sec2,rem1b}
The right half plane is invariant under each $f\in\mathcal F.$ The left half plane is invariant under each $f\in\mathcal F'.$
\end{remark}

\begin{theorem}\label{sec2,thmab}
 Consider the semigroup $G=[f,g]$, where $f(z)=e^{-z+\la}+\xi;\, \la,\xi\in\C,$\,with $\RE\la<0, \RE\xi\geq 1$ and $g(z)=e^{z+\mu}+\zeta; \,\mu,\zeta\in\C,$ \, with $\RE\mu<0, \RE\zeta\leq-1.$ Then $I(G)$ is an empty set.
\end{theorem}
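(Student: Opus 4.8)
The plan is to observe that Theorem~\ref{sec2,thmab} is an immediate corollary of the two preceding lemmas together with the basic containment recorded right after Definition~\ref{sec2,defn'''''}. Recall that for any transcendental semigroup, $I(G)\subset I(h)$ for every $h\in G$, and in particular
\[
I(G)\subset I(f)\cap I(g).
\]
So it suffices to show $I(f)\cap I(g)=\emptyset$ for the two specific generators.

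First I would note that $f(z)=e^{-z+\la}+\xi$ with $\RE\la<0$ and $\RE\xi\geq1$ is exactly a member of the family $\mathcal F$ of Lemma~\ref{sec2,lemab}, so that lemma applies verbatim and gives
\[
I(f)\subset\Big\{z=x+iy : x<0,\ (4k-3)\tfrac{\pi}{2}<y<(4k-1)\tfrac{\pi}{2},\ k\in\Z\Big\}\subset\{z\in\C:\RE z<0\}.
\]
Similarly, $g(z)=e^{z+\mu}+\ze$ with $\RE\mu<0$ and $\RE\ze\leq-1$ is a member of the family $\mathcal F'$ of Lemma~\ref{sec2,lemab'}, so
\[
I(g)\subset\Big\{z=x+iy : x>0,\ (4k-1)\tfrac{\pi}{2}<y<(4k+1)\tfrac{\pi}{2},\ k\in\Z\Big\}\subset\{z\in\C:\RE z>0\}.
\]
These two half-planes are disjoint, hence $I(f)\cap I(g)=\emptyset$, and therefore $I(G)\subset I(f)\cap I(g)=\emptyset$, i.e.\ $I(G)=\emptyset$.

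There is essentially no obstacle here: the content of the argument was already absorbed into the proofs of Lemmas~\ref{sec2,lemab} and~\ref{sec2,lemab'}, where one shows that orbits under $f$ (resp.\ $g$) starting in the right (resp.\ left) half-plane stay bounded, so that escaping points are forced into the open left (resp.\ right) half-plane. The only point worth stating explicitly is that the localizations of $I(f)$ and $I(g)$ sit in complementary open half-planes, so their intersection — and a fortiori $I(G)$ — is empty; the imaginary axis itself is excluded from both by the same lemmas. One may also remark, using Remark~\ref{sec2,rem1b}, that this is consistent with the dynamical picture: $G$ contains elements (namely $f$ and $g$) whose ``escaping directions'' point into opposite half-planes, so no single point can escape under every sequence drawn from $G$.
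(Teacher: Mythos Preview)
Your proof is correct and follows exactly the same approach as the paper: invoke Lemmas~\ref{sec2,lemab} and~\ref{sec2,lemab'} to localize $I(f)$ and $I(g)$ in disjoint half-planes, conclude $I(f)\cap I(g)=\emptyset$, and use $I(G)\subset I(f)\cap I(g)$ to obtain $I(G)=\emptyset$. The paper's proof is essentially the three-line version of what you wrote.
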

\begin{proof}
From Lemmas \ref{sec2,lemab} and \ref{sec2,lemab'}, $\displaystyle{I(f)\subset\{z : x<0, (4k-3)\frac{\pi}{2}<y<(4k-1)\frac{\pi}{2}, k\in\Z\}}$ and $\displaystyle{I(g)\subset\{z : x>0, (4k-1)\frac{\pi}{2}<y<(4k+1)\frac{\pi}{2}, k\in\Z\}.}$ Thus $I(f)\cap I(g)=\emptyset$ which implies $I(G)=\emptyset.$
\end{proof}
There are several important classes of transcendental semigroups where $I(G)$ is non empty. These non trivial classes justify our study of escaping sets of transcendental semigroups. We next give some examples of transcendental semigroup $G$ whose escaping set $I(G)$ is non empty.

\begin{example}\label{sec2,egi}
Let $f=e^{\la z},\,\la\in\C\setminus\{0\}$ and $g=f^s+\frac{2\pi i}{\la},\,s\in\N.$ Let $G=[f, g].$ For $n\in\N,\,g^n=f^{ns}+\frac{2\pi i}{\la}$ and so  $I(g)=I(f).$  Observe that for $ l,m,n,p\in\N, f^l \circ g^m=f^{l+ms}$ and $g^n\circ f^p=f^{ns+p}+\frac{2\pi i}{\la}$ and therefore for any $h\in G,$ either  $h=f^k$ for some $k\in\N,$ or $h=f^{qs}+\frac{2\pi i}{\la}=g^q$ for some $q\in\N.$ In either of the cases, $I(h)=I(f)$ and hence $I(G)=I(f)\neq\emptyset.$
\end{example}

\begin{example}\label{sec2,egii}
Let  $f=\la\sin{z},\la\in\C\setminus\{0\}$ and $g=f^m+2\pi,\,m\in\N.$ Let $G=[f, g].$  On similar lines, it can be seen that\,$\emptyset\neq I(f)=I(G),$ and hence $I(G)\neq\emptyset.$
\end{example}

\section{topological properties of escaping sets of certain classes of transcendental semigroups}\label{sec2'}
The following definitions are well known in  transcendental semigroup theory:

\begin{definition}\label{sec2,defn4'}
Let $G$ be a transcendental semigroup. A set $W$ is forward invariant under $G$ if $g(W)\subset W$ for all $g\in G$ and $W$ is backward invariant under $G$ if $g^{-1}(W)=\{w\in\C:g(w)\in W\}\subset W$ for all $g\in G.$ The set $W$ is called completely invariant under $G$ if it is both forward and backward invariant under $G.$ 
\end{definition}

Recall that $w\in\C$ is a critical value of a transcendental entire function $f$ if there exist some $w_0\in\C$ with $f(w_0)=w$ and $f'(w_0)=0.$ Here $w_0$ is called a critical point of $f.$ The image of a critical point of $f$ is  critical value of $f.$ Also recall that $\zeta\in\C$ is an asymptotic value of a transcendental entire function $f$ if there exist a curve $\Gamma$ tending to infinity such that $f(z)\to \zeta$ as $z\to\ity$ along $\Gamma.$
The definitions for  critical point, critical value and asymptotic value of a transcendental semigroup $G$ were introduced in \cite{dinesh1}.

\begin{definition}\label{sec2,defn4''}
A point $z_0\in\C$ is called a critical point of $G$ if it is a critical point of some $g\in G.$  A point $w\in\C$ is called a critical value of $G$ if it is a critical value of some $g\in G.$
\end{definition}

\begin{definition}\label{sec2,defn4'''}
 A point $w\in\C$ is called an asymptotic value of $G$ if it is an asymptotic  value of some $g\in G.$
\end{definition}

\begin{theorem}\label{sec2,thmi}
Let $f$ be a transcendental entire function  of period $c,$  let $g=f^s+c,$\,  $s\in\N$ and $G=[f, g].$ Then $F(G)=F(h)$ and $I(G)=I(h)$ for all $h\in G.$
\end{theorem}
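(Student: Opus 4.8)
The plan is to mimic the bookkeeping done in Examples~\ref{sec2,egi} and~\ref{sec2,egii}, showing that every element of $G=[f,g]$ is, up to an additive constant lying in the period lattice of $f$, a pure iterate of $f$, and that this additive constant does not affect the Fatou or escaping set. First I would record the basic observation that since $f(z+c)=f(z)$, for any $k\in\N$ we have $(f^s+c)^{\,k}=f^{sk}+c=g$-type, and more generally, for $h_1$ of the form $f^{a}$ or $f^{a}+c$ and $h_2$ likewise, the composite $h_1\circ h_2$ is again of the form $f^{b}$ or $f^{b}+c$: indeed $f^{a}\circ(f^{b}+c)=f^{a}\circ f^{b}=f^{a+b}$ by periodicity, $(f^{a}+c)\circ f^{b}=f^{a+b}+c$, and $(f^{a}+c)\circ(f^{b}+c)=f^{a}\circ(f^{b}+c)+c=f^{a+b}+c$. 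Hence by an easy induction on word length, every $h\in G$ has the shape $h=f^{k}$ for some $k\in\N$ or $h=f^{k}+c$ for some $k\in\N$.

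Next I would dispose of the additive constant. For any $k\in\N$, I claim $F(f^{k}+c)=F(f^{k})=F(f)$ and $I(f^{k}+c)=I(f^{k})=I(f)$. The equality $F(f^k)=F(f)$ and $I(f^k)=I(f)$ is classical. For the constant, note $(f^{k}+c)^{n}=f^{kn}+c$ again by periodicity (the same computation as above with $a=b=k$ iterated), so the iterates of $f^{k}+c$ are precisely the iterates of $f^{k}$ shifted by the fixed constant $c$; a sequence $z_j\to\infty$ iff $z_j+c\to\infty$, and a family $\{\phi_n+c\}$ is normal on a domain iff $\{\phi_n\}$ is, so both the Fatou set and the escaping set are unchanged. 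Therefore $F(h)=F(f)$ and $I(h)=I(f)$ for every $h\in G$.

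It then remains to identify $F(G)$ and $I(G)$. By definition $F(G)=\bigcap_{h\in G}F(h)$ in the sense that normality of the whole family on a neighbourhood is required; since every $h\in G$ is either $f^{k}$ or $f^{k}+c$, and on any neighbourhood $U\subset F(f)$ every sequence drawn from $\{f^{k}:k\in\N\}\cup\{f^{k}+c:k\in\N\}$ has a locally uniformly convergent subsequence (split the sequence into the two sub-families, each conjugate-normal as above, and diagonalise), we get $F(f)\subset F(G)$; the reverse inclusion $F(G)\subset F(h)$ is immediate from $\{h\}\subset G$. Similarly, for $I(G)$: if $z\in I(f)$ then $f^{k}(z)\to\infty$ and $f^{k}(z)+c\to\infty$, so every sequence in $G$ evaluated at $z$ — being a subsequence of one of these two escaping sequences — has a subsequence diverging to $\infty$ at $z$, giving $I(f)\subset I(G)$; and $I(G)\subset I(h)=I(f)$ trivially. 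Hence $F(G)=F(h)$ and $I(G)=I(h)$ for all $h\in G$.

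The only genuinely delicate point is the reduction step for $I(G)$: Definition~\ref{sec2,defn2...}'s requirement is that \emph{every} sequence in $G$ have a subsequence diverging to $\infty$ at $z$, so one must check that an arbitrary sequence $(h_j)$ in $G$ — which may mix the two forms $f^{k}$ and $f^{k}+c$ with $k$ not necessarily increasing — still has such a subsequence; this is handled by passing first to an infinite subsequence lying entirely in one of the two forms and then using that $f^{k_j}(z)\to\infty$ along \emph{any} subsequence once $z\in I(f)$, which is exactly the statement $I(f^{k})=I(f)$ for all $k$. I expect this interchange-of-quantifiers check to be the main (and only) obstacle; everything else is the periodicity bookkeeping already rehearsed in the two preceding examples.
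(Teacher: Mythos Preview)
Your argument follows the paper's line exactly: show every element of $G$ is a pure iterate of $f$ up to the additive period $c$, then note that neither the Fatou set nor the escaping set sees the added constant. In fact your reduction is sharper than the paper's --- the paper asserts every $h\in G$ is either $f^k$ or $g^q=f^{qs}+c$, which is false for $s\ge 2$ (for instance $g\circ f=f^{s+1}+c$ is not an iterate of $g$), whereas your $f^k$--or--$f^k{+}c$ dichotomy is the correct one --- and you also spell out the passage from ``$I(h)=I(f)$ for all $h$'' to ``$I(G)=I(f)$'', which the paper simply asserts.
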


\begin{proof}
For $n\in\N,\,g^n=f^{ns}+c$ and so $F(g)=F(f)$ and $I(g)=I(f).$  As in Example \ref{sec2,egi}, for $ l,m,n,p\in\N, f^l \circ g^m=f^{l+ms}$ and $g^n\circ f^p=f^{ns+p}+c$ and therefore for any $h\in G, h=f^k$ for some $k\in\N,$ or $h=f^{qs}+c=g^q$ for some $q\in\N.$ Thus $I(h)=I(f^k)=I(f)$ or $I(h)=I(g^q)=I(g)$ for all $h\in G$ and similarly $F(h)=F(f)$ or $F(h)=F(g)$ for all $h\in G.$ Hence $I(G)=I(h)$ and $F(G)=F(h)$ for all $h\in G.$
\end{proof}

In \cite{SZ} it was shown that if $f$ is an exponential map, that is, $f= e^{\la z},\,\la\in\C\setminus\{0\},$ then all components of $I(f)$ are unbounded, that is, Eremenko's conjecture \cite{e1} holds for exponential maps. The above result gets generalized to a specific class of transcendental semigroup generated by exponential maps.

\begin{theorem}\label{sec2,thmiii}
Let $f=e^{\la z},\,\la\in\C\setminus\{0\},$ $g=f^m+\frac{2\pi i}{\la},\,m\in\N$ and $G=[f, g].$ Then all components of $I(G)$ are unbounded.
\end{theorem}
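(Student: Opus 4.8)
The plan is to reduce the statement to the single-function case via Theorem~\ref{sec2,thmi} and then quote the known result for exponential maps. First I would observe that $f(z)=e^{\la z}$ is periodic with period $c=\frac{2\pi i}{\la}$, since $f\B(z+\tfrac{2\pi i}{\la}\B)=e^{\la z+2\pi i}=e^{\la z}=f(z)$. Thus, with $s=m$ and this choice of $c$, the pair $(f,g)$ with $g=f^m+\frac{2\pi i}{\la}$ is exactly of the form treated in Theorem~\ref{sec2,thmi}.

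Next I would apply Theorem~\ref{sec2,thmi} directly: it gives $I(G)=I(h)$ for every $h\in G$, and in particular $I(G)=I(f)$. (Equivalently, one can rerun the elementary bookkeeping of Example~\ref{sec2,egi}: every element of $G$ is either $f^k$ or $g^q=f^{qs}+\frac{2\pi i}{\la}$, and in both cases the escaping set equals $I(f)$ because adding the period $\frac{2\pi i}{\la}$ after iterating $f$ does not change which points escape.)

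Finally I would invoke the theorem of Schleicher and Zimmer~\cite{SZ}, recalled in the paragraph preceding the statement, which asserts that for an exponential map $f=e^{\la z}$ every escaping point can be joined to $\ity$ by a curve of escaping points, so in particular every component of $I(f)$ is unbounded (indeed Eremenko's conjecture holds for such $f$). Combining this with $I(G)=I(f)$ yields that every component of $I(G)$ is unbounded, which is the claim.

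I do not expect a genuine obstacle here: the content of the proof is entirely absorbed into Theorem~\ref{sec2,thmi} and the cited result of~\cite{SZ}, and the only thing to check carefully is that $\frac{2\pi i}{\la}$ is indeed a period of $e^{\la z}$ so that Theorem~\ref{sec2,thmi} applies verbatim. The mild subtlety worth a sentence is that $I(G)=I(h)$ for \emph{all} $h$, so in particular the components of $I(G)$ literally coincide with those of $I(f)$, making the unboundedness conclusion immediate rather than merely "inherited".
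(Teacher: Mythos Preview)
Your proposal is correct and follows essentially the same approach as the paper: apply Theorem~\ref{sec2,thmi} (using that $\tfrac{2\pi i}{\la}$ is a period of $e^{\la z}$) to obtain $I(G)=I(f)$, and then cite~\cite{SZ} for the unboundedness of the components of $I(f)$. The paper's proof is just this two-line reduction, without your additional verification of the period or the parenthetical rederivation via Example~\ref{sec2,egi}.
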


\begin{proof}
From Theorem \ref{sec2,thmi}, $I(G)=I(h)=I(f)$ for all $h\in G$ and so $I(G)=I(f).$ Since $f$ is an exponential map, so all components of $I(f)$ are unbounded, see \cite{SZ}, and therefore all components of $I(G)$ are unbounded. 
\end{proof}
Recall the
 Eremenko-Lyubich class
 \[\mathcal{B}=\{f:\C\to\C\,\,\text{transcendental entire}: \text{Sing}{(f^{-1})}\,\text{is bounded}\},\]
where Sing($f^{-1}$) is the set of critical values and asymptotic values of $f$ and their finite limit points. Each $f\in\mathcal B$ is said to be of bounded type. Moreover,  if $f$ and $g$ are of bounded type, then so is $f\circ g$ \cite{berg4}. 

Recall that an entire function $f$ is called \emph{postsingularly bounded} if the postsingular set $\mathcal P(f)$ is bounded. We now define postsingularly bounded in the context of semigroups.

\begin{definition}\label{sec2,defn1}
A transcendental semigroup $G$ is called \emph{postsingularly bounded} if each $g\in G$ is postsingularly bounded.
\end{definition}
In \cite{R1} it was shown that if $f$ is an entire function of bounded type for which all singular orbits are bounded (that is, $f$ is postsingularly bounded), then each connected component of $I(f)$ is unbounded, providing a partial answer to a conjecture of Eremenko \cite{e1}. We  show  this result gets generalized to a particular class of postsingularly bounded semigroup. 

\begin{lemma}\label{sec2,lema}
Let $f\in\mathcal B$ be postsingularly bounded, then so is $f^k$ for all $k\in\N.$
\end{lemma}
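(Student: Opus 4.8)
The plan is to verify directly that $f^k \in \mathcal B$ and that $\mathcal P(f^k)$ is bounded, using the known relationship between singular values of a composition and those of its factors. First I would recall the standard fact that for entire functions $g, h$ one has $\mathrm{Sing}((g\circ h)^{-1}) \subset g(\mathrm{Sing}(h^{-1})) \cup \mathrm{Sing}(g^{-1})$; applying this inductively with $g = h = f$ (and more generally $g = f$, $h = f^{j}$) gives
\[
\mathrm{Sing}((f^k)^{-1}) \subset \bigcup_{j=0}^{k-1} f^{j}\bigl(\mathrm{Sing}(f^{-1})\bigr).
\]
Since $f \in \mathcal B$, the set $\mathrm{Sing}(f^{-1})$ is bounded; and since $f$ is postsingularly bounded, $\mathcal P(f) = \overline{\bigcup_{n\geq 0} f^{n}(\mathrm{Sing}(f^{-1}))}$ is bounded, so in particular each $f^{j}(\mathrm{Sing}(f^{-1}))$ is contained in the bounded set $\mathcal P(f)$. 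Hence the right-hand side above is bounded, which shows $f^k \in \mathcal B$.

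Next I would show $\mathcal P(f^k)$ is bounded. By definition $\mathcal P(f^k) = \overline{\bigcup_{n\geq 0} (f^k)^{n}(\mathrm{Sing}((f^k)^{-1}))} = \overline{\bigcup_{n\geq 0} f^{kn}(\mathrm{Sing}((f^k)^{-1}))}$. Using the inclusion for $\mathrm{Sing}((f^k)^{-1})$ from the previous paragraph,
\[
\bigcup_{n\geq 0} f^{kn}\bigl(\mathrm{Sing}((f^k)^{-1})\bigr) \subset \bigcup_{n\geq 0}\bigcup_{j=0}^{k-1} f^{kn+j}\bigl(\mathrm{Sing}(f^{-1})\bigr) = \bigcup_{m\geq 0} f^{m}\bigl(\mathrm{Sing}(f^{-1})\bigr),
\]
so $\mathcal P(f^k) \subset \mathcal P(f)$, which is bounded by hypothesis. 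Therefore $f^k$ is postsingularly bounded, and combined with $f^k \in \mathcal B$ this is exactly the assertion.

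The only genuinely delicate point is the justification of the singular-value inclusion $\mathrm{Sing}((g\circ h)^{-1}) \subset g(\mathrm{Sing}(h^{-1})) \cup \mathrm{Sing}(g^{-1})$ and its iterate; this is a well-known fact (it underlies the statement quoted earlier in the excerpt that a composition of bounded-type functions is of bounded type, cf.\ \cite{berg4}), but one should be slightly careful that taking closures does not enlarge things beyond $\mathcal P(f)$ — this is handled by noting $\mathcal P(f)$ is already closed and that a continuous image of a bounded set is bounded, so all the finite unions and their images stay inside the fixed bounded set $\mathcal P(f)$ before any closure is taken. Everything else is bookkeeping with these two inclusions.
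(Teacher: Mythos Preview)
Your proof is correct and follows essentially the same route as the paper: both rest on the identity (or inclusion) $\mathrm{Sing}((f^k)^{-1}) \subset \bigcup_{j=0}^{k-1} f^{j}(\mathrm{Sing}(f^{-1}))$, which the paper quotes from \cite{baker5}, and then observe that forward iterates of these singular values stay inside the bounded set $\mathcal P(f)$. Your set-theoretic packaging, yielding directly $\mathcal P(f^k)\subset\mathcal P(f)$, is in fact slightly cleaner than the paper's pointwise argument tracking each $z_\lambda$ with its individual bound $M_\lambda$, since the definition of postsingularly bounded requires a uniform bound on $\mathcal P(f^k)$; but the underlying idea is identical.
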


\begin{proof}
For each $k\in\N, f^k$ is of bounded type. 
Let Sing${f^{-1}}=\{z_\la:\la\in\Lambda\}.$ Then Sing${(f^k)}^{-1}=\bigcup_{l=0}^{k-1}f^l(\text{Sing}f^{-1}),$ \cite{baker5}. We show for  $0\leq l\leq k-1$,  each singular value in $f^l(\text{Sing}f^{-1})$ has  bounded orbit under $f^k.$ A singular value in  $f^l(\text{Sing}f^{-1})$ has the form $f^l(z_\la)$ for some $z_\la\in \text{Sing}f^{-1}.$ By hypothesis, orbit of $z_\la$ under $f$ is bounded, that is, there exist $M_\la>0$ such that $|f^n(z_\la)|\leq M_\la$ for all  $n\in\N.$ Then  for all $n\in\N,$ \,$|f^{kn}(f^l(z_\la))|=|f^{kn+l}(z_\la)|\leq M_\la$  and hence the result.
\end{proof}
\begin{theorem}\label{sec2,thmi'}
Let $f\in\mathcal B$ be periodic of period $c$ and let $f$ be postsingularly bounded. Let $g=f^s+c, \,s\in\N$ and  $G=[f,g].$ Then $G$ is postsingularly bounded and all components of $I(G)$ are unbounded.
\end{theorem}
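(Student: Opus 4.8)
The plan is to reduce both assertions to the corresponding statements about the single function $f$. Since $f$ is transcendental entire of period $c$ and $g=f^s+c$, Theorem \ref{sec2,thmi} applies and gives $I(G)=I(h)$ and $F(G)=F(h)$ for every $h\in G$; in particular $I(G)=I(f)$. Because $f\in\mathcal B$ is postsingularly bounded, the result quoted above from \cite{R1} says every connected component of $I(f)$ is unbounded, so every component of $I(G)=I(f)$ is unbounded. This settles the assertion about $I(G)$ with no further work, and what remains is to verify that $G$ is postsingularly bounded.

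To that end I would show each $h\in G$ is postsingularly bounded. Recall from the proof of Theorem \ref{sec2,thmi} that, using $f^l\circ g^m=f^{l+ms}$ and $g^n\circ f^p=f^{ns+p}+c$, every $h\in G$ is either $f^k$ for some $k\in\N$ or $f^k+c$ for some $k\in\N$ (the $g$-type elements, since by periodicity $g^q=f^{qs}+c$). If $h=f^k$, then Lemma \ref{sec2,lema} already gives that $h$ is postsingularly bounded; indeed, $\text{Sing}((f^k)^{-1})=\bigcup_{l=0}^{k-1}f^l(\text{Sing}(f^{-1}))$ and the $f^k$-orbits of these points are $f$-orbits, so $\mathcal P(f^k)=\mathcal P(f)$. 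If $h=f^k+c$, write $\tau_c(z)=z+c$, so that $h=\tau_c\circ f^k$; since $\tau_c$ is an affine bijection with no singular values, $\text{Sing}(h^{-1})=\tau_c\big(\text{Sing}((f^k)^{-1})\big)$, and since $f^k$ has period $c$, an induction gives $h^n=f^{nk}+c$, whence
\[
h^n\big(\text{Sing}(h^{-1})\big)=\tau_c\Big(f^{nk}\big(\text{Sing}((f^k)^{-1})\big)\Big)=\tau_c\Big(\bigcup_{l=0}^{k-1}f^{nk+l}\big(\text{Sing}(f^{-1})\big)\Big).
\]
Taking the union over $n\geq0$ and then closures shows $\mathcal P(h)=\tau_c(\mathcal P(f))=\mathcal P(f)+c$, which is bounded because $\mathcal P(f)$ is. Hence every $h\in G$ is postsingularly bounded, that is, $G$ is postsingularly bounded. (Alternatively, one can first verify $\mathcal P(g)=\mathcal P(f)+c$ in this way, note $g\in\mathcal B$, and then apply Lemma \ref{sec2,lema} with $g$ in place of $f$ to cover all the $g^q$.)

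The one step requiring genuine care is the computation for $h=f^k+c$, where periodicity of $f$ is used twice: to guarantee that the translation by $c$ does not accumulate under iteration (so that $h^n=f^{nk}+c$), and to identify $\text{Sing}(h^{-1})$ with a translate of $\text{Sing}((f^k)^{-1})$. Everything else is a direct assembly of Theorem \ref{sec2,thmi}, Lemma \ref{sec2,lema}, and the quoted theorem from \cite{R1}.
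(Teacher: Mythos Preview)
Your proof is correct and follows essentially the same route as the paper: reduce $I(G)$ to $I(f)$ via Theorem~\ref{sec2,thmi}, invoke \cite{R1}, and verify each element of $G$ is postsingularly bounded by using the structural dichotomy $h=f^k$ or $h=f^{qs}+c$ together with Lemma~\ref{sec2,lema}. The only organisational difference is that the paper first proves $g$ itself is postsingularly bounded (by the same translation computation you give) and then applies Lemma~\ref{sec2,lema} to $g$ to handle the $g^q$, whereas you treat all translates $f^k+c$ at once; your parenthetical alternative is exactly the paper's argument.
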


\begin{proof} 
Observe that $g$ is of bounded type. 
We first show that $g$ is postsingularly bounded. Let Sing${f^{-1}}=\{z_\la:\la\in\Lambda\}.$ By hypothesis, for each $z_\la\in \text{Sing}f^{-1},$ \,there exist $M_\la >0$ such that  $|f^n(z_\la)|\leq M_\la$ for all $n\in\N.$ Now Sing\,$g^{-1}=\bigcup_{k=0}^{s-1}f^k (\text{Sing}{f^{-1}})+c.$ We show for $0\leq k\leq s-1,$ each singular value  of $f^k(\text{Sing}{f^{-1}})+c$ has bounded orbit  under $g$. A singular value in  $f^k(\text{Sing}f^{-1})+c$ has the form $f^k(z_\la)+c$ for some $z_\la\in \text{Sing}f^{-1}.$  Consider
\begin{equation}\label{sec2,eq1'}
\begin{split}\notag
g^n(f^k(z_\la)+c)
&=(f^s+c)^n(f^k(z_\la)+c)\\
&=f^{ns}(f^k(z_\la)+c)+c\\
&=f^{ns+k}(z_\la)+c.
\end{split}
\end{equation}
 As $|f^n(z_\la)|\leq M_\la$ for all $n\in\N,$ we obtain $|g^n(f^k(z_\la)+c)|\leq M_\la+|c|$ for all $n\in\N$ and hence $g$ is postsingularly bounded. Using the argument as in Theorem \ref{sec2,thmi}, any $h\in G$ has the form $h=f^l,$ for some $l\in\N,$ or $h=g^q,$ for some $q\in\N.$ Thus each $h\in G$ is of bounded type. Using Lemma \ref{sec2,lema}, $h$ is postsingularly bounded and hence $G$ is postsingularly bounded. From Theorem \ref{sec2,thmi}, we have $I(g)=I(f)$ and  $I(G)=I(h)=I(f)$ for all $h\in G.$ Applying the result \cite[Theorem 1.1]{R1}, all components of $I(f)$ are unbounded and hence all components of $I(G)$ are unbounded.
\end{proof}

We illustrate this with an example:

\begin{example}\label{sec2,egab'''}
Let $f=e^{\la z},\,\la>\frac{1}{e},$ $g=f^s+\frac{2\pi i}{\la},\,s\in\N$ and  $G=[f,g].$ Suppose that the orbit of Sing{$f^{-1}=\{0\}$} is bounded.
Then there exist $M>0$ such that $|f^n(0)|\leq M$ for all $n\in\N.$ Now Sing\,$g^{-1}=\bigcup_{k=0}^{s-1}f^k (\text{Sing}{f^{-1}})+\frac{2\pi i}{\la}.$ 
 For each $0\leq k\leq s-1,$
\begin{equation}\label{sec2,eq1}
\begin{split}\notag
g^n\B(f^k(0)+\frac{2\pi i}{\la}\B)
&=\B(f^s+\frac{2\pi i}{\la}\B)^n\B(f^k(0)+\frac{2\pi i}{\la}\B)\\
&=f^{ns}\B(f^k(0)+\frac{2\pi i}{\la}\B)+\frac{2\pi i}{\la}\\
&=f^{ns+k}(0)+\frac{2\pi i}{\la}.
\end{split}
\end{equation}
 As $|f^n(0)|\leq M$ for all $n\in\N,$ we obtain $|g^n(f^k(0)+\frac{2\pi i}{\la})|\leq M+\frac{2\pi }{\la},$ for all $n\in\N$ and  for $0\leq k\leq s-1,$ which implies   $g$ is postsingularly bounded. Using the argument as in Theorem \ref{sec2,thmi}, any $h\in G$ has the form $h=f^l,$ for some $l\in\N,$ or $h=g^q,$ for some $q\in\N.$ Using Lemma \ref{sec2,lema}, $h$ is postsingularly bounded and hence $G$ is postsingularly bounded. From Theorem \ref{sec2,thmi}, we have $I(g)=I(f)$ and  $I(G)=I(h)=I(f)$ for all $h\in G.$ Applying  result \cite[Theorem 1.1]{R1}, all components of $I(f)$ are unbounded and hence all components of $I(G)$ are unbounded.
\end{example}

\begin{remark}\label{sec2,rem1}
In above example, as $\la>\frac{1}{e},$ therefore $I(f)$ is connected \cite{R4}. Moreover,  $I(h)$ is connected for all $h\in G$  and as $I(G)=I(f),$ $I(G)$ is also connected.
\end{remark}
\begin{remark}\label{sec2,rem2}
In above example, $F(f)=\emptyset,$ \cite{dev1} and hence $F(G)=\emptyset.$
\end{remark}

Recall that an entire function $f$ is called \emph{postsingularly finite} if each of its singular value has a finite orbit, which means that each of the singular orbit is preperiodic. 
We now define postsingularly finite in the context of semigroups.
\begin{definition}\label{sec2,defn3}
A transcendental semigroup $G$ is called \emph{postsingularly finite} if each $g\in G$ is postsingularly finite.
\end{definition}
It follows from  result \cite[Theorem 1.1]{R1} that if $f\in\mathcal B$ is postsingularly finite, then all components of $I(f)$ are unbounded. We shall show that this result gets generalized to a specific class of postsingularly finite semigroup.

\begin{lemma}\label{sec2,lemb}
Let $f\in\mathcal B$ be postsingularly finite, then so is $f^k$ for all $k\in\N.$
\end{lemma}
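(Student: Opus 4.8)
The plan is to mimic the structure of Lemma \ref{sec2,lema}, replacing ``bounded orbit'' everywhere by ``finite (hence preperiodic) orbit''. First I would record the two standard facts we need: that $f^k$ is again of bounded type whenever $f$ is (this is the composition-closure of $\mathcal B$, already cited in the excerpt via \cite{berg4}), and that $\text{Sing}((f^k)^{-1})=\bigcup_{l=0}^{k-1}f^l(\text{Sing}(f^{-1}))$, which is the formula from \cite{baker5} quoted in the proof of Lemma \ref{sec2,lema}. So it suffices to show that every point of $f^l(\text{Sing}(f^{-1}))$, for $0\le l\le k-1$, has finite orbit under $f^k$.

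Next I would fix such a point, necessarily of the form $f^l(z_\la)$ with $z_\la\in\text{Sing}(f^{-1})$ and $0\le l\le k-1$. By hypothesis the full forward orbit $\{f^n(z_\la):n\ge 0\}$ is a finite set; call its cardinality $N_\la$. The orbit of $f^l(z_\la)$ under $f^k$ is $\{f^{kn+l}(z_\la):n\ge 0\}$, which is a subset of $\{f^n(z_\la):n\ge 0\}$ and hence finite (with at most $N_\la$ elements). Since a point with finite forward orbit is automatically preperiodic, this is exactly the statement that $f^l(z_\la)$ is a preperiodic point of $f^k$, i.e. a postsingularly finite singular value for $f^k$. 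As this holds for every $l$ and every $\la$, every singular value of $f^k$ has finite orbit, so $f^k$ is postsingularly finite, completing the proof.

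I do not expect any genuine obstacle here; the only point requiring a word of care is the observation that ``finite forward orbit'' passes to the subsequence $\{f^{kn+l}(z_\la)\}_n$ — but this is immediate since that set is literally contained in the finite set $\{f^n(z_\la):n\ge 0\}$, and a point whose forward orbit under $f^k$ lands in a finite set is preperiodic under $f^k$. (One should also note explicitly that $z_\la$ having finite orbit under $f$ forces, for each fixed $l$, the point $f^l(z_\la)$ to have finite orbit under $f$, which is trivial, and then restrict to the arithmetic progression of indices to get the $f^k$-orbit.) The proof is thus a direct translation of Lemma \ref{sec2,lema}, and no new ideas are needed beyond the orbit-counting bookkeeping.
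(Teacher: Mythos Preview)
Your proposal is correct and follows essentially the same approach as the paper: both use the decomposition $\text{Sing}((f^k)^{-1})=\bigcup_{l=0}^{k-1}f^l(\text{Sing}f^{-1})$ from \cite{baker5} and then check that each $f^l(z_\la)$ is preperiodic under $f^k$. The only cosmetic difference is in that last step: the paper explicitly computes indices, showing $(f^k)^{p+n}(f^l(z_\la))=(f^k)^{n}(f^l(z_\la))$ from $f^{p}(f^n(z_\la))=f^n(z_\la)$, whereas you observe more directly that $\{f^{kn+l}(z_\la):n\ge 0\}\subset\{f^n(z_\la):n\ge 0\}$ is finite; both are equally valid.
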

\begin{proof}
Let Sing${f^{-1}}=\{z_\la:\la\in\Lambda\}.$ Then Sing${(f^k)}^{-1}=\bigcup_{l=0}^{k-1}f^l(\text{Sing}f^{-1}).$ We show for each $0\leq l\leq k-1$,  each singular value in $f^l(\text{Sing}f^{-1})$ has  finite orbit under $f^k.$ A singular value in  $f^l(\text{Sing}f^{-1})$ has the form $f^l(z_\la),$ for some $z_\la\in \text{Sing}f^{-1}.$ By hypothesis, the orbit of $z_\la$ is preperiodic under $f.$ So  for some $n\in\N,$ $f^n(z_\la)$ is periodic  of period say $p,$ that is, $f^p(f^n(z_\la))=f^n(z_\la).$ Then 
\begin{equation}\label{sec2,eqz'}
\begin{split}\notag
(f^k)^{(p+n)}(f^l(z_\la))
&=f^l (f^{k(p+n)}(z_\la))\\
&=f^l (f^{kn}(z_\la))\\
&=f^{kn}(f^l(z_\la)),
\end{split}
\end{equation}
 which implies $f^{kp}(f^{kn}(f^l(z_\la)))=f^{kn}(f^l(z_\la)).$ Thus $f^l(z_\la)$ is preperiodic under $f^k$ and hence the result.
\end{proof}
\begin{theorem}\label{sec2,thmiv}
Let $f\in\mathcal B$ be  of period $c$ and let $f$ be postsingularly finite. Let $g=f^s+c$,\, $s\in\N$ and $G=[f,g].$ Then $G$ is postsingularly finite and all components of $I(G)$ are unbounded.
\end{theorem}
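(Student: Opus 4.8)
The plan is to follow the template already established in Theorems \ref{sec2,thmi'} and \ref{sec2,egab'''}, replacing the word ``bounded'' by ``finite'' throughout and invoking Lemma \ref{sec2,lemb} in place of Lemma \ref{sec2,lema}. The structure has three parts: (1) show $g = f^s + c$ is postsingularly finite; (2) show every $h \in G$ is postsingularly finite, so $G$ is a postsingularly finite semigroup; (3) deduce unboundedness of components of $I(G)$.

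For step (1), first note that since $f \in \mathcal{B}$ and the class $\mathcal{B}$ is closed under composition, $f^s \in \mathcal{B}$, and adding the constant $c$ does not change the singular set structure in a way that leaves $\mathcal{B}$, so $g \in \mathcal{B}$. Next I would compute $\text{Sing}(g^{-1}) = \bigcup_{k=0}^{s-1} f^k(\text{Sing}\,f^{-1}) + c$, exactly as in Theorem \ref{sec2,thmi'}. A typical singular value of $g$ has the form $f^k(z_\la) + c$ for some $z_\la \in \text{Sing}\,f^{-1}$ and $0 \le k \le s-1$. Using periodicity of $f$ (so that $f^s + c$ acts on orbits just by shifting the iterate count, i.e. $g^n(w+c) = f^{ns}(w) + c$ when $w$ is in the range — more precisely the identity $g^n(f^k(z_\la)+c) = f^{ns+k}(z_\la) + c$ used in Theorem \ref{sec2,thmi'}), the forward orbit of $f^k(z_\la)+c$ under $g$ is contained in $\{f^{ns+k}(z_\la) + c : n \ge 0\}$. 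Since $f$ is postsingularly finite, the orbit $\{f^j(z_\la) : j \ge 0\}$ is finite, hence $\{f^{ns+k}(z_\la) + c : n \ge 0\}$ is finite; so $g$ is postsingularly finite.

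For step (2), I would reuse the observation from Theorem \ref{sec2,thmi} that every $h \in G$ is either $f^l$ or $g^q$ for some exponent in $\N$. Each such $h$ lies in $\mathcal{B}$ (closure under composition). If $h = f^l$, then $h$ is postsingularly finite by Lemma \ref{sec2,lemb}; if $h = g^q$, then since $g$ is itself in $\mathcal{B}$ and postsingularly finite by step (1), Lemma \ref{sec2,lemb} applied to $g$ gives that $g^q$ is postsingularly finite. Hence every $h \in G$ is postsingularly finite, i.e. $G$ is postsingularly finite by Definition \ref{sec2,defn3}. For step (3), Theorem \ref{sec2,thmi} gives $I(G) = I(h) = I(f)$ for all $h \in G$; since $f \in \mathcal{B}$ is postsingularly finite, the remark following Lemma \ref{sec2,lemb} (a consequence of \cite[Theorem 1.1]{R1}) yields that all components of $I(f)$ are unbounded, and therefore all components of $I(G)$ are unbounded.

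I do not expect any genuine obstacle: the argument is a direct analogue of the postsingularly bounded case, and all the nontrivial input (the singular-set-of-composition formula, the periodicity identity for $g^n$, Lemma \ref{sec2,lemb}, and the Rempe result) is already available. The one point requiring a little care is verifying that $g = f^s + c \in \mathcal{B}$: one should note that $\text{Sing}(g^{-1})$ is just a translate by $c$ of $\text{Sing}((f^s)^{-1})$, which is bounded because $f^s \in \mathcal{B}$, so $g \in \mathcal{B}$ as well — and likewise each $g^q \in \mathcal{B}$ so that Lemma \ref{sec2,lemb} is applicable to $g$.
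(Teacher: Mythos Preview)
Your proposal is correct and follows essentially the same route as the paper's proof: show $g$ is postsingularly finite via the identity $g^n(f^k(z_\la)+c)=f^{ns+k}(z_\la)+c$, reduce every $h\in G$ to the form $f^l$ or $g^q$ by Theorem \ref{sec2,thmi}, invoke Lemma \ref{sec2,lemb}, and finish with $I(G)=I(f)$ together with \cite[Theorem 1.1]{R1}. The only cosmetic difference is that the paper verifies preperiodicity of $f^k(z_\la)+c$ under $g$ by exhibiting explicit indices (showing $g^{p+n}(f^k(z_\la)+c)=g^n(f^k(z_\la)+c)$ from $f^p(f^n(z_\la))=f^n(z_\la)$), whereas you argue more briefly that the $g$-orbit sits inside the finite set $\{f^{ns+k}(z_\la)+c:n\ge 0\}$; these are equivalent.
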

\begin{proof}
We first show that $g$ is postsingularly finite. Let Sing${f^{-1}}=\{z_\la:\la\in\Lambda\}.$ Now Sing\,$g^{-1}=\bigcup_{k=0}^{s-1}f^k (\text{Sing}{f^{-1}})+c.$ We show for $0\leq k\leq s-1,$ each singular value  of $f^k(\text{Sing}{f^{-1}})+c$ is preperiodic under $g$. A singular value in  $f^k(\text{Sing}f^{-1})+c$ has the form $f^k(z_\la)+c,$ for some $z_\la\in \text{Sing}f^{-1}.$  By hypothesis, orbit of $z_\la$ is preperiodic under $f.$ So  for some $n\in\N,$ $f^n(z_\la)$ is periodic  of period say $p,$ that is, $f^p(f^n(z_\la))=f^n(z_\la).$ Then
\begin{equation}\label{sec2,eqz''}
\begin{split}\notag
 g^{(p+n)}(f^k(z_\la)+c)
&=(f^s+c)^{(p+n)} (f^k(z_\la)+c)\\
&=f^{s(p+n)}(f^k(z_\la)+c)+c\\
&=f^k (f^{s(p+n)}(z_\la))+c\\
&=f^k (f^{sn}(z_\la))+c\\
&=f^{sn}(f^k(z_\la))+c\\
&=(f^s+c)^n(f^k(z_\la)+c)\\
&=g^n(f^k(z_\la)+c),
\end{split}
\end{equation}
 which implies $g^p( g^n(f^k(z_\la)+c))=g^n(f^k(z_\la)+c).$ Thus $f^k(z_\la)+c$ is preperiodic under $g$ which implies $g$ is postsingularly finite. Using the argument as in Theorem \ref{sec2,thmi}, any $h\in G$ has the form $h=f^l,$ for some $l\in\N,$ or $h=g^q,$ for some $q\in\N.$ Using Lemma \ref{sec2,lemb}, $h$ is postsingularly finite and hence $G$ is postsingularly finite. From Theorem \ref{sec2,thmi}, we have $I(g)=I(f)$ and  $I(G)=I(h)=I(f)$ for all $h\in G.$ Applying result \cite[Theorem 1.1]{R1}, all components of $I(f)$ are unbounded and hence all components of $I(G)$ are unbounded.
\end{proof}
Recall that an entire function $f$ is called \emph{hyperbolic} if the postsingular set $\mathcal P(f)$ is a compact subset of $F(f).$ For instance, $e^{\la z},\,0<\la<\frac{1}{e}$ are examples of hyperbolic entire functions. We next define hyperbolic in the context of  semigroups.
\begin{definition}\label{sec2,defn4}
A transcendental semigroup $G$ is called \emph{hyperbolic} if each $g\in G$ is hyperbolic.
\end{definition}

It follows from  result \cite[Theorem 1.1]{R1} that if $f\in\mathcal B$ is hyperbolic, then all components of $I(f)$ are unbounded. We  show  this result gets generalized to a particular class of hyperbolic semigroup.
In \cite{baker5}, it was shown that $\mathcal P(f^k)=\mathcal P(f)$ for all $k\in\N.$
Therefore if $f$ is hyperbolic, then so is $f^k$ for all $k\in\N.$
\begin{theorem}\label{sec2,thmv}
Let $f\in\mathcal B$ be  of period $c$ and let $f$ be hyperbolic. Let $g=f^m+c$,\, $m\in\N$ and $G=[f,g].$ Then $G$ is hyperbolic and all components of $I(G)$ are unbounded.
\end{theorem}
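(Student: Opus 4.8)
The plan is to mirror the proofs of Theorems~\ref{sec2,thmi'} and~\ref{sec2,thmiv}: first show that $g$ is hyperbolic by computing its postsingular set, then promote this to every $h\in G$ using the structure of $G$ together with the identity $\mathcal P(f^k)=\mathcal P(f)$ from~\cite{baker5}, and finally read off the statement about $I(G)$ from Theorem~\ref{sec2,thmi} and~\cite[Theorem~1.1]{R1}.

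The key point is that $\mathcal P(g)=\mathcal P(f)+c$. Since $f$ has period $c$ we have $f^m(w+c)=f^m(w)$ for every $w\in\C$, and hence, exactly as in Theorem~\ref{sec2,thmi}, $g^n=f^{mn}+c$ for all $n\in\N$. Moreover $\text{Sing}(g^{-1})=\bigcup_{k=0}^{m-1}f^k(\text{Sing}(f^{-1}))+c$, since $g$ is a translate of $f^m$. Thus for $z_\la\in\text{Sing}(f^{-1})$, $0\le k\le m-1$ and $n\in\N$, periodicity of $f$ gives
\[
g^n\bigl(f^k(z_\la)+c\bigr)=f^{mn}\bigl(f^k(z_\la)+c\bigr)+c=f^{mn+k}(z_\la)+c,
\]
and the same identity holds trivially for $n=0$, both sides being $f^k(z_\la)+c$. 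Letting $k$ run over $\{0,\dots,m-1\}$ and $n$ over $\N\cup\{0\}$, the exponent $mn+k$ runs over all of $\N\cup\{0\}$, so $\bigcup_{n\ge 0}g^n(\text{Sing}(g^{-1}))=\bigl(\bigcup_{j\ge 0}f^j(\text{Sing}(f^{-1}))\bigr)+c$; taking closures (a translation is a homeomorphism) gives $\mathcal P(g)=\mathcal P(f)+c$.

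I would then conclude hyperbolicity of $g$. As $f$ is hyperbolic, $\mathcal P(f)$ is compact, hence so is $\mathcal P(g)=\mathcal P(f)+c$. For $\mathcal P(g)\subset F(g)$, note that $F(f)$ is invariant under $z\mapsto z+c$: if $\{f^n\}$ is normal on a neighbourhood $U$, then since $f^n(w+c)=f^n(w)$ for all $n\in\N$, the family $\{f^n\}$ on $U+c$ is $\{f^n|_U\}$ precomposed with the biholomorphism $w\mapsto w-c$, hence normal. Together with $F(g)=F(f)$ (Theorem~\ref{sec2,thmi}) and $\mathcal P(f)\subset F(f)$, this yields $\mathcal P(g)=\mathcal P(f)+c\subset F(f)+c=F(f)=F(g)$, so $g$ is hyperbolic. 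For an arbitrary $h\in G$, the argument of Theorem~\ref{sec2,thmi} shows $h=f^l$ or $h=g^q$ for some $l,q\in\N$; in the first case $\mathcal P(h)=\mathcal P(f^l)=\mathcal P(f)$ and $F(h)=F(f^l)=F(f)$, and in the second $\mathcal P(h)=\mathcal P(g^q)=\mathcal P(g)$ and $F(h)=F(g^q)=F(g)$, using~\cite{baker5} and the invariance of the Fatou set under iteration. In either case $\mathcal P(h)$ is a compact subset of $F(h)$, so $h$ is hyperbolic, and therefore $G$ is hyperbolic.

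Finally, by Theorem~\ref{sec2,thmi}, $I(G)=I(h)=I(f)$ for all $h\in G$; and since $f\in\mathcal B$ is hyperbolic,~\cite[Theorem~1.1]{R1} gives that every component of $I(f)$ is unbounded, whence every component of $I(G)$ is unbounded. The only step needing care is the interaction between the period $c$ and the translation defining $g$ — namely the identities $\mathcal P(g)=\mathcal P(f)+c$ and $F(f)+c=F(f)$ — everything else being a routine assembly of facts already established in this section.
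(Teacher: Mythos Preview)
Your proof is correct and follows essentially the same route as the paper: compute $\mathcal P(g)=\mathcal P(f)+c$, deduce that $g$ is hyperbolic, use the structural description of $G$ from Theorem~\ref{sec2,thmi} together with $\mathcal P(f^k)=\mathcal P(f)$ to show every $h\in G$ is hyperbolic, and conclude via Theorem~\ref{sec2,thmi} and~\cite[Theorem~1.1]{R1}. Your argument is in fact more explicit than the paper's in one place: where the paper simply writes ``Since $f$ is hyperbolic so is $g$'' after establishing $\mathcal P(g)=\mathcal P(f)+c$, you supply the missing justification that $F(f)+c=F(f)$ (from periodicity) and $F(g)=F(f)$, so that $\mathcal P(g)\subset F(g)$.
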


\begin{proof}
We first show that $\mathcal P(g)=\mathcal P(f)+c.$ Consider
\begin{equation}\label{sec2,eq2}
\begin{split}\notag
\mathcal P(g)
&=\bigcup_{n\geq 0}g^n(\text{Sing}\,g^{-1})\\
&=\bigcup_{n\geq 0}\bigcup_{k=0}^{m-1}(f^m+c)^n(f^k(\text{Sing}f^{-1})+c)\\
&=\bigcup_{n\geq 0}\bigcup_{k=0}^{m-1}(f^{mn}(f^k(\text{Sing}f^{-1})+c)+c)\\
&=\bigcup_{n\geq 0}\bigcup_{k=0}^{m-1}(f^{mn}(f^k(\text{Sing}f^{-1})+c))\\
&=\bigcup_{n\geq 0}(f^n(\text{Sing}f^{-1})+c)\\
&=\mathcal P(f)+c.
\end{split}
\end{equation}
Since $f$ is hyperbolic so is $g.$
  Using the argument as in Theorem \ref{sec2,thmi}, any $h\in G$ has the form $h=f^l,$ for some $l\in\N,$ or $h=g^q,$ for some $q\in\N.$ From observation made above, $h$ is hyperbolic and hence $G$ is hyperbolic. From Theorem \ref{sec2,thmi}, we have $I(g)=I(f)$ and  $I(G)=I(h)=I(f)$ for all $h\in G.$ Applying result \cite[Theorem 1.1]{R1}, all components of $I(f)$ are unbounded and hence all components of $I(G)$ are unbounded.
\end{proof}

\section{the escaping set and the julia set}\label{sec3}
Throughout this section we assume $I(G)$ to be non empty. 
An important property of the escaping set of a transcendental semigroup $G$ is its forward invariance under $G$

\begin{theorem}\label{sec2,thmvi}
Let $G=[g_1,g_2,\dots]$ be a transcendental semigroup. Then $I(G)$ is forward invariant under $G.$
\end{theorem}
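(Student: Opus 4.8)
The plan is to unwind the definition of $I(G)$ directly and use that $G$ is closed under composition. Fix $z_0\in I(G)$ and $g\in G$; the goal is to show $g(z_0)\in I(G)$, i.e.\ that every sequence in $G$ has a subsequence which diverges to infinity at the point $g(z_0)$. Once this is done for arbitrary $z_0\in I(G)$ and arbitrary $g\in G$, we will have $g(I(G))\subset I(G)$ for every $g\in G$, which is precisely forward invariance.

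So I would let $(h_n)_{n\in\N}$ be an arbitrary sequence in $G$. Since every element of $G=[g_1,g_2,\ldots]$ is a finite composition of generators and $g\in G$, the functions $h_n\circ g$ again lie in $G$, so $(h_n\circ g)_{n\in\N}$ is itself a sequence in $G$. Because $z_0\in I(G)$, this sequence admits a subsequence $(h_{n_k}\circ g)_{k\in\N}$ with $(h_{n_k}\circ g)(z_0)\to\ity$ as $k\to\ity$. But $(h_{n_k}\circ g)(z_0)=h_{n_k}(g(z_0))$, so $(h_{n_k})_{k\in\N}$ is a subsequence of $(h_n)_{n\in\N}$ that diverges to infinity at $g(z_0)$. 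As $(h_n)$ was arbitrary, $g(z_0)\in I(G)$.

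There is essentially no obstacle here: the statement is a formal consequence of the definition of $I(G)$ together with the semigroup property. The only point requiring care is that one must extract the convergent-to-infinity subsequence from the \emph{composed} sequence $(h_n\circ g)$ rather than from $(h_n)$ itself, since it is the composed sequence whose behaviour at $z_0$ is controlled by the hypothesis $z_0\in I(G)$; and one should note explicitly that $h_n\circ g\in G$. It is also worth remarking (consistent with the discussion in the introduction) that the same argument does \emph{not} yield backward invariance: from $h_{n_k}(g(z_0))\to\ity$ one can conclude nothing about $h_{n_k}(z_0)$, which is why $I(G)$ is in general only forward invariant.
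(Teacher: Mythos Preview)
Your argument is correct and is essentially the same as the paper's: both fix $z_0\in I(G)$ and $g\in G$, precompose an arbitrary sequence in $G$ by $g$ to obtain another sequence in $G$, and apply the defining property of $I(G)$ at $z_0$ to extract a subsequence with $h_{n_k}(g(z_0))\to\infty$. If anything, your write-up is cleaner, since you keep $g$ fixed and make explicit the point that the subsequence must be extracted from $(h_n\circ g)$ rather than from $(h_n)$.
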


\begin{proof}
Let $z_0\in I(G).$ Then every sequence $\{g_n\}$ in $G$ has a subsequence say, $\{g_{n_k}\}$ which diverges to $\ity$ at $z_0.$ Consider the sequence $\{g_n\circ g : g\in G\}$ in $G.$ Then it has a subsequence $\{g_{n_k}\circ g: g\in G\}$ which diverges to $\ity$ at $z_0,$ that is, $g_{n_k}(g(z_0))\to\ity$ as $k\to\ity$ for all $g\in G.$ This implies $g(z_0)\in I(G)$ for all $g\in G$ and hence $I(G)$ is forward invariant.
\end{proof}
For a transcendental entire function $f$,\,the Julia set of $f$ is the boundary of escaping set of $f$ \cite{e1}. We show this result gets generalized to  transcendental semigroups. We first prove a lemma

\begin{lemma}\label{sec2,lem2}
Let $G=[g_1,g_2,\ldots]$ be a transcendental semigroup. Then
\begin{enumerate}\item[(i)] $int(I(G))\subset F(G)$ where $int (S)$ denotes interior of a subset $S$ of a topological space $X$;
\item[(ii)] $ext(I(G))\subset F(G)$ where $ext (S)$ denotes exterior of a subset $S$ of a topological space $X$ and is defined as $ext (S)=int (X\setminus S).$
\end{enumerate}
\end{lemma}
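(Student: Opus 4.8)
The plan is to read the two inclusions as normality statements: part~(i) says that $\{g:g\in G\}$ is normal on every open subset of $I(G)$, and part~(ii) that it is normal on every open set disjoint from $I(G)$. Both will be proved by contradiction, the main tool being the blow-up property of $J(G)$: if $z^*\in J(G)$ then $\{g:g\in G\}$ is non-normal on every neighbourhood $V$ of $z^*$, so by Montel's fundamental normality test (for families of entire functions) $\bigcup_{g\in G}g(V)$ omits at most one point of $\C$. The second ingredient is the forward invariance of $I(G)$ from Theorem~\ref{sec2,thmvi}, equivalently the backward invariance of $\C\setminus I(G)$.

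For (i): let $z_0\in int(I(G))$ and fix an open disc $D\ni z_0$ with $D\subset I(G)$; assume $D\not\subset F(G)$ and choose $z^*\in D\cap J(G)$ with an open disc $V\subset D$ about $z^*$. Blow-up gives $\bigcup_{g\in G}g(V)\supset\C\setminus\{e\}$ for some $e\in\C$. Since $\C\setminus I(G)\supset\C\setminus I(g_1)$ is infinite, pick $w\in(\C\setminus I(G))\setminus\{e\}$ and then $g\in G$, $\zeta\in V$ with $g(\zeta)=w$. As $w\notin I(G)$, some sequence $\{h_n\}$ in $G$ has $\{h_n(w)\}$ bounded; then $\{h_n\circ g\}\subset G$ has $(h_n\circ g)(\zeta)=h_n(w)$ bounded, so no subsequence of it diverges to $\infty$ at $\zeta$, i.e.\ $\zeta\notin I(G)$, contradicting $\zeta\in V\subset D$. (Equivalently: $\overline{\C\setminus I(G)}$ is closed and, since each $g\in G$ is an open map, backward invariant under $G$; having more than two points, it therefore contains $J(G)$, so $J(G)\cap int(I(G))=\emptyset$.)

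Part (ii) is genuinely harder, and this is where I expect the main obstacle. It is equivalent to $J(G)\subset\overline{I(G)}$, i.e.\ to the density of escaping points of $G$ in $J(G)$. The same blow-up at a point $z^*\in J(G)\cap ext(I(G))$ — using that $I(G)\neq\emptyset$ and is in fact infinite, since by forward invariance the $g_1$-orbit of any of its points is infinite and escaping — produces $g\in G$ and $\zeta$ in a neighbourhood of $z^*$ with $g(\zeta)=a\in I(G)$. But now the pull-back step of (i) fails: $g(\zeta)\in I(G)$ controls only sequences of the form $\{h_n\circ g\}$ at $\zeta$, not all sequences in $G$, and $I(G)$ is merely forward, not completely, invariant. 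To get around this I would reduce to the one-function theory: using the structural fact $J(G)=\overline{\bigcup_{g\in G}J(g)}$, it suffices to prove $J(g)\subset\overline{I(G)}$ for each $g\in G$, and near any point of $J(g)=\partial I(g)$ (Eremenko) the classical, \emph{completely} invariant escaping set $I(g)$ is dense. The crucial remaining step is then to upgrade ``escaping under $g$'' to ``escaping under all of $G$'' at well-chosen pre-images inside a small neighbourhood of $z^*$; I expect this — showing that forward invariance of $I(G)$ nonetheless forces $I(G)$ to accumulate wherever some $I(g)$ does — to be the real content of the lemma.
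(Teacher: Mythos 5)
Your part (i) is correct and uses essentially the paper's ingredients: the paper notes that, by forward invariance (Theorem \ref{sec2,thmvi}), every $g\in G$ maps the neighbourhood $U\subset I(G)$ into $I(G)$, hence omits on $U$ at least two fixed non-escaping points, and concludes normality from Montel; you package the same combination of Montel plus forward invariance as a blow-up contradiction, which is fine.

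The genuine gap is part (ii): your proposal does not prove it. You correctly observe that the naive pull-back breaks down because $I(G)$ is only forward invariant, you reduce the claim (via $J(G)=\overline{\bigcup_{g\in G}J(g)}$) to showing that $I(G)$ accumulates wherever some $I(g)$ does, and then you explicitly leave that decisive upgrade from ``escaping under $g$'' to ``escaping under all of $G$'' open; so the lemma is not established by your argument. For comparison, the paper's proof of (ii) takes $z_0\in ext(I(G))\cap J(G)$ with a neighbourhood $V\subset ext(I(G))$, uses $J(G)=\overline{\bigcup_{g\in G}J(g)}$ to find one element $g_N\in G$ with $J(g_N)\cap V\neq\emptyset$, applies the blow-up property of $J(g_N)$ to conclude that $\bigcup_{n\in\N}g_N^{n}(V)$ omits at most one point of $\C$, and contradicts this with the assertion that $V\subset ext(I(G))$ forces the family $\{g_N^{n}\vert_V : n\in\N\}$ to be bounded. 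That last inference is exactly the sensitive point you flagged: a point of $V$ can lie in $I(g_N)$, or at least have unbounded $g_N$-orbit, without lying in $I(G)$, since $I(G)\subset I(g_N)$ and only forward invariance of $I(G)$ is available. So you have located the crux accurately, but locating it is not resolving it: to complete (ii) you would have to justify that boundedness step (for instance by producing from the blow-up a point of $V$ whose full $G$-orbit escapes, using that $I(G)\neq\emptyset$ is assumed in this section) or supply a different argument; as written, your proposal proves only half of the lemma.
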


\begin{proof}
\begin{enumerate}\item[(i)] Suppose $\ze\in int(I(G)).$ Then there exist a neighborhood $U$ of $\ze$ such that $\ze\in U\subset I(G).$ As $I(G)$ is forward invariant under $G$, for every sequence $\{g_n\}$ in $G$, $g_n(U)$ never hits a non escaping point. Since there are at least two preperiodic points, so $\{g_n\}$ is normal on $U$ by Montel's theorem and so $U\subset F(G)$ and hence the result.

\item[(ii)] Suppose $z_0\in ext(I(G))\cap J(G).$ Then there exist a neighborhood $V$ of $z_0$ such that $z_0\in V\subset ext(I(G)).$ Also from \cite[Theorem 4.2]{poon1},\,$J(G)=\overline{(\bigcup_{g\in G}J(g))}$, and as $z_0\in J(G),$  there exist a sequence $\{g_j\}\subset G$ and for each $j\in\N,$ $\ze_j\in J(g_j)$ such that $\ze_j\to z_0.$ There exist $N>0$ such that $\ze_j\in V\,$ for all $j\geq N.$ In particular $\ze_N\in V$ and $\ze_N\in J(g_N).$ By blowing up property of $J(g_N),$\,$\bigcup_{n\in\N}g_{N}^{n}(V)$ can omit at most one point of $\C$. But this is a contradiction, since $V\subset ext(I(G))$ and so $(g_{N}^{n}\vert_{V}:n\in\N)$ is bounded and hence the result.\qedhere
\end{enumerate}
\end{proof}


\begin{theorem}\label{sec2,thm7}
Let $G=[g_1,g_2,\ldots]$ be a transcendental semigroup. Then $J(G)=\partial I(G)$ where $\partial S$ denotes boundary of a set $S.$
\end{theorem}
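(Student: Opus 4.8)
The plan is to prove the two inclusions $\partial I(G) \subseteq J(G)$ and $J(G) \subseteq \partial I(G)$ separately, using Lemma~\ref{sec2,lem2} together with the fact that $J(G)$ is a closed set with empty interior structure relative to the escaping set.

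First I would establish $\partial I(G) \subseteq J(G)$. Take $z_0 \in \partial I(G)$. The complement $\C = \operatorname{int}(I(G)) \cup \partial I(G) \cup \operatorname{ext}(I(G))$ is a disjoint union, so $z_0$ lies in neither the interior nor the exterior of $I(G)$. By Lemma~\ref{sec2,lem2}(i) we have $\operatorname{int}(I(G)) \subseteq F(G)$ and by Lemma~\ref{sec2,lem2}(ii) we have $\operatorname{ext}(I(G)) \subseteq F(G)$. Hence if $z_0$ were in $F(G)$, I would need to rule out the possibility that $z_0$ lies on the boundary yet still in the open set $F(G)$; so instead I argue directly: suppose $z_0 \in F(G)$. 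Then $z_0$ has a connected neighborhood $U \subseteq F(G)$ on which the family $G$ is normal. Since $z_0 \in \partial I(G)$, $U$ meets $I(G)$, so $U$ contains an escaping point $w$; I claim $U \subseteq I(G)$, which would make $z_0$ an interior point of $I(G)$, contradicting $z_0 \in \partial I(G)$ (a boundary point of $I(G)$ cannot be interior to $I(G)$). To see $U \subseteq I(G)$: take any sequence $\{g_n\}$ in $G$; by normality on $U$ a subsequence $\{g_{n_k}\}$ converges locally uniformly on $U$ to a holomorphic limit or to $\infty$. Since $w \in I(G)$, every sequence — in particular $\{g_{n_k}\}$ — has a further subsequence diverging to $\infty$ at $w$; combined with local uniform convergence of $\{g_{n_k}\}$, the limit function must be $\equiv \infty$ on the connected set $U$, so $g_{n_k} \to \infty$ uniformly on compact subsets of $U$, hence every point of $U$ is escaping for this sequence. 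As $\{g_n\}$ was arbitrary, $U \subseteq I(G)$. This contradiction forces $z_0 \in J(G)$.

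Next I would prove $J(G) \subseteq \partial I(G)$. Let $z_0 \in J(G)$. Since $J(G)$ is closed and $\C = \operatorname{int}(I(G)) \sqcup \partial I(G) \sqcup \operatorname{ext}(I(G))$, and since by Lemma~\ref{sec2,lem2} both $\operatorname{int}(I(G))$ and $\operatorname{ext}(I(G))$ are contained in $F(G)$, the point $z_0 \in J(G)$ cannot lie in $\operatorname{int}(I(G))$ or in $\operatorname{ext}(I(G))$. Therefore $z_0 \in \partial I(G)$. This direction is essentially immediate once Lemma~\ref{sec2,lem2} is in hand.

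Combining the two inclusions gives $J(G) = \partial I(G)$. The main obstacle is the first inclusion, specifically the step showing that normality on a neighborhood $U$ together with the presence of a single escaping point $w \in U$ forces all of $U$ to escape: one must be careful that the limit of a locally uniformly convergent subsequence on a connected domain is either everywhere finite or identically $\infty$, and that divergence to $\infty$ at one point then propagates. Everything else reduces to elementary set-theoretic manipulation of the interior/boundary/exterior decomposition together with the two parts of Lemma~\ref{sec2,lem2}. One should also note the standing assumption of this section that $I(G) \neq \emptyset$, so that $\partial I(G)$ and the arguments above are not vacuous.
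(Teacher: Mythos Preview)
Your proof is correct and follows the same strategy as the paper. The paper's own proof is just two lines: it says $J(G)\subset\partial I(G)$ follows from Lemma~\ref{sec2,lem2} (exactly your second paragraph), and declares $\partial I(G)\subset J(G)$ to be ``evident''; your normality argument on a Fatou neighbourhood is precisely the standard justification for that word, so you have simply spelled out what the paper leaves implicit.
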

\begin{proof}
$J(G)\subset\partial I(G)$ follows from above Lemma \ref{sec2,lem2}, and $\partial I(G)\subset J(G)$ is evident.
\end{proof}
The following corollary is immediate
\begin{corollary}\label{sec2,cor1}
Let $G=[g_1,g_2,\ldots]$ be a transcendental semigroup. Then $J(G)\subset\overline{I(G)}.$
\end{corollary}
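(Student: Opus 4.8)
The plan is to read this off immediately from Theorem~\ref{sec2,thm7}. That theorem already establishes the equality $J(G)=\partial I(G)$, and for \emph{any} subset $S$ of a topological space $X$ one has the purely set-theoretic inclusion $\partial S=\overline{S}\cap\overline{X\setminus S}\subseteq\overline{S}$, which needs nothing about $S$ beyond the definition of the boundary. Taking $S=I(G)$ and $X=\ti\C$ (equivalently $\C$, after deleting the point at infinity) then yields $J(G)=\partial I(G)\subseteq\overline{I(G)}$, which is precisely the assertion. So the entire proof consists of citing Theorem~\ref{sec2,thm7} and appending this one-line topological remark; the word ``immediate'' in the statement is accurate.

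Since all the substantive work has already been done in Lemma~\ref{sec2,lem2} and Theorem~\ref{sec2,thm7}, I do not anticipate any obstacle. If one preferred not to invoke the full equality $J(G)=\partial I(G)$, it would be enough to use only the inclusion $J(G)\subseteq\partial I(G)$ — the non-trivial half of Theorem~\ref{sec2,thm7}, obtained from parts (i) and (ii) of Lemma~\ref{sec2,lem2} together with $J(G)=\overline{\bigcup_{g\in G}J(g)}$ — and then the same observation $\partial I(G)\subseteq\overline{I(G)}$ closes the argument. In either formulation the standing hypothesis of this section that $I(G)\neq\emptyset$ is what makes Theorem~\ref{sec2,thm7} available, and beyond that no further input is required.
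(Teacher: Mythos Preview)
Your proposal is correct and matches the paper's own reasoning: the paper simply declares the corollary ``immediate'' after Theorem~\ref{sec2,thm7}, and your argument spells out exactly the intended one-line deduction $J(G)=\partial I(G)\subset\overline{I(G)}$.
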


For a transcendental entire function $f$ of bounded type, $I(f)\subset J(f)$ \cite{el2}. Also for a transcendental entire function $f$ of bounded type \,$J(f)=\overline{I(f)}$ \cite{el2}. We show  these results get generalized to a finitely generated transcendental semigroup in which the generators are of bounded type.

\begin{theorem}\label{sec2,thm5}
Let $G=[g_1,\ldots,g_n],\,g_i\in\mathcal B,\,1\leq i\leq n$ be a finitely generated transcendental semigroup. Then
\begin{enumerate}
\item[(i)] $I(G)\subset J(G);$
\item[(ii)]  $J(G)=\overline{I(G)}.$
\end{enumerate}
\end{theorem}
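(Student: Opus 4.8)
The plan is to deduce both parts from the corresponding statements for a single transcendental entire function of bounded type, transferring them along two elementary inclusions valid for any $g$ in a semigroup $G$: first, $I(G)\subset I(g)$ (observed right after Definition \ref{sec2,defn'''''}); and second, $J(g)\subset J(G)$, which holds because $G$ contains every iterate $g^k$, so normality of the full family $\{h:h\in G\}$ on an open set forces normality of the subfamily $\{g^k\}_{k\in\N}$ there, i.e. $F(G)\subset F(g)$. Recall also that $\mathcal B$ is closed under composition, so every element of $G$ is of bounded type once the generators are.

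For part (i), fix a generator, say $g_1\in\mathcal B$. By the Eremenko--Lyubich theorem \cite{el2}, $I(g_1)\subset J(g_1)$. Chaining the three inclusions,
\[
I(G)\subset I(g_1)\subset J(g_1)\subset J(G),
\]
which is (i). Any generator, or indeed any $h\in G$ (each of bounded type), would serve equally well.

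For part (ii), one inclusion is already available: $J(G)\subset\overline{I(G)}$ is precisely Corollary \ref{sec2,cor1}. For the reverse inclusion, since $J(G)$ is closed and, by part (i), contains $I(G)$, it also contains $\overline{I(G)}$; hence $J(G)=\overline{I(G)}$. Alternatively one may argue directly that $\overline{I(G)}\subset\overline{I(g_1)}=J(g_1)\subset J(G)$, using the companion Eremenko--Lyubich identity $J(f)=\overline{I(f)}$ for $f\in\mathcal B$ \cite{el2}. There is no genuine analytic obstacle in this argument; its whole substance is the bookkeeping of the inclusions relating $I$ and $J$ of $G$ to those of its members, fed by the single-function bounded-type theory of \cite{el2}. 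The one point to keep in mind is that part (ii) relies on the standing hypothesis of this section that $I(G)\neq\emptyset$ (entering through Corollary \ref{sec2,cor1}); without it the identity can fail, since by Theorem \ref{sec2,thmab} there exist finitely generated bounded-type semigroups with $I(G)=\emptyset$ while $J(G)\neq\emptyset$.
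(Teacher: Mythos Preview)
Your proof is correct and follows essentially the same route as the paper: the chain $I(G)\subset I(g)\subset J(g)\subset J(G)$ for any $g\in G$ of bounded type gives (i), and (ii) then follows from (i) together with Corollary~\ref{sec2,cor1}. The only cosmetic difference is that you justify $J(g)\subset J(G)$ directly via $F(G)\subset F(g)$, whereas the paper cites Poon's formula $J(G)=\overline{\bigcup_{g\in G}J(g)}$ \cite[Theorem 4.2]{poon1}; both yield the needed inclusion immediately.
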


\begin{proof}
\begin{enumerate} \item[(i)] Observe that, each  $g\in G$ is of bounded type and so $I(g)\subset J(g)$ for all $g\in G.$ Also  from \cite[Theorem 4.2]{poon1},\,$J(G)=\overline{(\bigcup_{g\in G}J(g))}$.
 Hence for each $g\in G,\,I(G)\subset I(g)\subset J(g)\subset J(G).$
\item[(ii)] This is evident from part\,(i) of this theorem and Corollary \ref{sec2,cor1}.\qedhere
\end{enumerate}
\end{proof}

 If $G$ is a transcendental semigroup and $U$  a multiply connected component of $F(G),$ define 
\[
\ti\G_U=\{g\in G: F(g)\,\text{has a multiply connected component}\, \ti U_g\supset U\}
\] as in \cite[Definition 3.1]{dinesh1}.

For an entire function $f$ having a multiply connected component, it was shown in \cite[Theorem 2]{RS1} that $I(f)$ is connected. We now show that for a transcendental semigroup $G$ which has a multiply connected component $U,$ each element in $\ti\G_U$ has connected escaping set.

\begin{theorem}\label{sec2,thm9}
Let $G=[g_1,g_2,\ldots]$ be a transcendental semigroup such that $F(G)$ has a multiply connected component $U.$ Then for all $g\in\ti\G_U,\, I(g)$ is connected.
\end{theorem}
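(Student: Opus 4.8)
The plan is to reduce the statement to the single-function result of Rippon--Stallard (\cite[Theorem 2]{RS1}) applied to an appropriate iterate, exactly in the spirit of the proofs of Theorems \ref{sec2,thmi'}, \ref{sec2,thmiv} and \ref{sec2,thmv}. Fix $g\in\ti\G_U$. By definition of $\ti\G_U$, the Fatou set $F(g)$ has a multiply connected component $\ti U_g\supset U$. The first step is to invoke the classical theory of multiply connected Fatou components of a single transcendental entire function: by Baker's theorem such a component is a wandering domain, and more to the point, if $f$ has \emph{some} multiply connected Fatou component then \cite[Theorem 2]{RS1} applies and $I(f)$ is connected. So the entire content of what must be checked is that the hypothesis "$g$ has a multiply connected Fatou component'' is genuinely available — and it is, directly from $g\in\ti\G_U$ and the definition of $\ti\G_U$ quoted just before the theorem.

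Thus the core of the argument is a single line: since $F(g)$ has the multiply connected component $\ti U_g$, the function $g$ (a transcendental entire function, as every element of $G$ is) satisfies the hypothesis of \cite[Theorem 2]{RS1}, whence $I(g)$ is connected. I would write the proof in essentially that form, perhaps first recalling briefly that a multiply connected Fatou component of a transcendental entire function is necessarily bounded and wandering (Baker), and that on such a component the iterates tend locally uniformly to $\infty$, which is what makes $I(g)$ grab hold of (a neighbourhood of) that component and ties the picture together; but none of this needs to be reproved here since \cite[Theorem 2]{RS1} already packages it.

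The only place where care is needed — and where I expect the sole (minor) obstacle to lie — is making sure the logical chain "$g\in\ti\G_U$ $\Rightarrow$ $g$ has a multiply connected Fatou component'' is stated with the right quantifiers: $\ti\G_U$ was defined so that each of its elements carries a multiply connected component $\ti U_g$ containing the fixed semigroup component $U$, so this implication is immediate and requires no compatibility check between $U$ and $\ti U_g$ beyond the containment already built into the definition. There is no real analytic difficulty; the theorem is a clean transfer of a known single-map result through the definitional bridge $\ti\G_U$, parallel to how Theorem \ref{sec2,thmiii} transferred the Schleicher--Zimmer result. I would therefore present the proof as: \emph{Fix $g\in\ti\G_U$. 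By definition $F(g)$ has a multiply connected component, so by \cite[Theorem 2]{RS1} the set $I(g)$ is connected. Since $g\in\ti\G_U$ was arbitrary, the conclusion follows.}
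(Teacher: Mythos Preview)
Your proposal is correct and matches the paper's own proof essentially line for line: the paper simply notes that for each $g\in\ti\G_U$ the component $\ti U_g\subset F(g)$ is multiply connected and then invokes \cite[Theorem 2]{RS1} to conclude that $I(g)$ is connected. The extra background you mention (Baker's results on multiply connected wandering domains) is not used in the paper's argument either, so your final one-line version is exactly what appears there.
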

\begin{proof}
For all $g\in\ti\G_U,\,U\subset\ti U_g,$ where $\ti U_g\subset F(g)$ is multiply connected. Using result  \cite[Theorem 2]{RS1}, $I(g)$ is connected for all $g\in\ti\G_U.$
\end{proof}
Recall that a finitely generated transcendental semigroup $G=[g_1,\ldots,g_n]$ is called abelian if $g_i\circ g_j=g_j\circ g_i,\,$ for all $1\leq i,j\leq n.$ 
For a transcendental entire function $f,\,\overline{I(f)}$ has no bounded components  \cite{e1}.
We show  in the following cases $\overline{I(G)}$ has no bounded components:

\begin{theorem}\label{sec2,thm10}
Let $G=[g_1,\ldots,g_n],$ $g_i\in\mathcal B,\,1\leq i\leq n$ be a finitely generated abelian transcendental semigroup. Then $\overline{I(G)}$ has no bounded components.
\end{theorem}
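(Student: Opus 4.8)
The plan is to reduce the assertion to Eremenko's theorem for a single transcendental entire map. Since every generator lies in $\mathcal B$, Theorem~\ref{sec2,thm5} already gives $\overline{I(G)}=J(G)$, so it suffices to show that $J(G)$ has no bounded component. I would keep two further facts in hand: first, $J(G)=\overline{\bigcup_{g\in G}J(g)}$ by \cite[Theorem~4.2]{poon1}; second, every $g\in G$ is a finite composition of functions of bounded type and hence is itself of bounded type \cite{berg4}, so that $J(g)=\overline{I(g)}$ \cite{el2}, and therefore $J(g)$ has no bounded component for each $g\in G$ by \cite{e1}.

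Next I would argue by contradiction. Suppose $K$ were a bounded component of $J(G)$. Then $K$ is compact (a bounded component of the closed set $J(G)$), and a standard separation argument yields a bounded open set $O$ with $K\subset O$, $\overline O\subset\C$, and $\partial O\cap J(G)=\emptyset$; in particular $\partial O\subset F(G)$. Picking any $z_0\in K\subset J(G)=\overline{\bigcup_{g\in G}J(g)}$, the open neighbourhood $O$ of $z_0$ meets $\bigcup_{g\in G}J(g)$, say $w\in O\cap J(g_0)$ for some $g_0\in G$. Let $C$ be the connected component of $J(g_0)$ containing $w$. By the first step $C$ is unbounded, so $C\not\subset\overline O$; being connected and meeting both of the disjoint open sets $O$ and $\C\setminus\overline O$, the set $C$ must meet $\partial O$. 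But $C\subset J(g_0)\subset\overline{\bigcup_{g\in G}J(g)}=J(G)$, which contradicts $\partial O\cap J(G)=\emptyset$. Hence $J(G)$, and therefore $\overline{I(G)}=J(G)$, has no bounded component.

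The part that I expect to need the most care is the topological separation step: extracting from a compact component $K$ of the closed set $J(G)$ a bounded open neighbourhood $O$ whose boundary misses $J(G)$. This rests on the fact that, for a closed subset of the plane, the connected component of a point coincides with its quasi-component, so $K$ has arbitrarily small neighbourhoods that are relatively open and closed in $J(G)$; each such set then enlarges to an open subset of $\C$ with boundary contained in $F(G)$. Everything else is a direct assembly of Theorem~\ref{sec2,thm5}, the identity $J(G)=\overline{\bigcup_{g\in G}J(g)}$, the stability of bounded type under composition, and Eremenko's theorem for a single function; I would also note that the argument in fact uses only that the generators lie in $\mathcal B$.
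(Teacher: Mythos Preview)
Your argument is correct, and it differs genuinely from the paper's. The paper exploits the abelian hypothesis through \cite[Theorem~5.9]{dinesh1}, which gives $J(G)=J(g)$ for every $g\in G$; combined with Theorem~\ref{sec2,thm5}(ii) this yields $\overline{I(G)}=J(G)=J(g)=\overline{I(g)}$, and Eremenko's result for the single map $g$ finishes the proof in one line. You instead work only with $J(G)=\overline{\bigcup_{g\in G}J(g)}$ and a topological separation argument, never invoking commutativity. Your route is longer and requires care in extracting a bounded open neighbourhood $O$ of the hypothetical compact component with $\partial O\subset F(G)$ (your handling of this via quasi-components in a compact piece of $J(G)$ is the right idea), but it buys a strictly stronger conclusion: as you observe, the abelian hypothesis is superfluous, and the statement holds for any finitely generated transcendental semigroup with generators in~$\mathcal B$ (under the section's standing assumption that $I(G)\neq\emptyset$, which both proofs use when citing Theorem~\ref{sec2,thm5}(ii)). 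The paper's proof, by contrast, is shorter and avoids the point-set topology entirely, at the cost of relying on the external structural result $J(G)=J(g)$ for abelian~$G$.
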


\begin{proof}
Observe that each  $g\in G$ is of bounded type and so $J(g)=\overline{I(g)}$ for all $g\in G,$ \cite{el2}.  
From \cite[Theorem 5.9]{dinesh1}, $J(G)=J(g)$ for all $g\in G.$ Also from Theorem \ref{sec2,thm5}(ii), $J(G)=\overline{I(G)},$ we have $\overline{I(G)}=\overline{I(g)}$ for all $g\in G.$ Since $\overline{I(g)}$ has no bounded components \cite{e1}, so does $\overline{I(G)}.$
\end{proof}

\begin{theorem}\label{sec2,thm10'}
Let $g_1=f$ be a transcendental entire function of period $c.$ For $n>1,\,$ let  $g_n=f^{n-1}+c$  and $G=[g_1,\ldots,g_n].$ Then $\overline{I(G)}$ has no bounded components.
\end{theorem}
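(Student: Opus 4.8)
The plan is to reduce this statement to the already-established Theorem \ref{sec2,thm10} by observing that the semigroup $G=[g_1,\ldots,g_n]$ described here is precisely a semigroup of the type covered there, once we check two things: that the generators are of bounded type, and that they commute. First I would recall from Theorem \ref{sec2,thmi} (with $s$ running over the values $1,\ldots,n-1$, or more carefully, noting that each $g_n=f^{n-1}+c$ is of the form ``$f^s+c$'' with $f$ of period $c$) that every element $h\in G$ has the form $h=f^k$ for some $k\in\N$, or $h=f^k+c$ for some $k\in\N$; this is the normal-form computation already carried out there, using $f^a\circ(f^b+c)=f^{a+b}$ (since $f$ has period $c$) and $(f^a+c)\circ f^b=f^{a+b}+c$. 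Consequently $I(h)=I(f)$ and $F(h)=F(f)$ for every $h\in G$, so $I(G)=I(f)$.

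Next I would verify the hypotheses needed to invoke the bounded-type machinery. The subtlety is that the statement as written does not explicitly assume $f\in\mathcal B$; but $\overline{I(f)}$ having no bounded components is exactly Eremenko's result \cite{e1}, which holds for \emph{every} transcendental entire function $f$, with no bounded-type hypothesis. So in fact the cleanest route avoids Theorem \ref{sec2,thm10} altogether: since $I(G)=I(f)$, we get $\overline{I(G)}=\overline{I(f)}$, and the latter has no bounded components by \cite{e1}. This is the approach I would actually write down, as it is shorter and requires no extra hypothesis on $f$.

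So the key steps, in order, are: (1) establish the normal form for elements of $G$ exactly as in the proof of Theorem \ref{sec2,thmi}, using periodicity of $f$ to collapse compositions; (2) conclude $I(h)=I(f)$ for all $h\in G$, hence $I(G)=\bigcap_{h\in G}I(h)=I(f)$ (the first equality because $I(G)\subseteq I(h)$ always, and here all the $I(h)$ coincide with $I(f)$, which is itself of the form $I(g)$ for $g\in G$); (3) take closures to get $\overline{I(G)}=\overline{I(f)}$; (4) invoke \cite{e1} to conclude $\overline{I(f)}$, and therefore $\overline{I(G)}$, has no bounded components.

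The main obstacle is really just step (1): one must be careful that the normal-form argument genuinely applies when the generators are $f, f^2+c, f^3+c, \ldots, f^{n-1}+c$ rather than a single pair $[f, f^s+c]$. The commutativity $(f^a+c)\circ(f^b+c)=(f^b+c)\circ(f^a+c)$ and the reduction of any word in the generators to one of $f^k$ or $f^k+c$ both follow from $f\circ f^j = f^j\circ f$ and $f^j(w+c)=f^j(w)$; these are routine but should be stated. Everything downstream is immediate. I do not expect any genuine difficulty beyond this bookkeeping.
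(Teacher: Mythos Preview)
Your proposal is correct and matches the paper's own proof essentially line for line: the paper simply states that, on similar lines to Theorem~\ref{sec2,thmi}, one has $I(G)=I(g)$ for all $g\in G$, and then concludes that all components of $\overline{I(G)}$ are unbounded via Eremenko's result \cite{e1}. Your additional care about the normal-form argument with multiple generators $f, f+c, f^2+c,\ldots,f^{n-1}+c$ is well placed, but as you anticipate it is routine bookkeeping and the paper suppresses it entirely.
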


\begin{proof}
On similar lines to Theorem \ref{sec2,thmi}, $I(G)=I(g)$ for all $g\in G$ and hence all components of $\overline{I(G)}$ are unbounded.
\end{proof}

Recall \cite[Section 3]{dinesh1}, for an invariant component $U\subset F(G),$ the \emph{stabilizer} of $U$ is the set $G_U=\{g\in G:U_g=U\},$ where  $U_g$ denotes  the component of $F(G)$ containing $g(U)$. The next result gives a criterion for an invariant component of $F(G)$ to be a Siegel disk.

\begin{theorem}\label{sec2,thm8}
Let $G=[g_1\ldots,g_n]$ be a finitely  generated transcendental semigroup and let $U\subset F(G)$ be an invariant component. If $G_U$ has some non constant limit function on $U$, that is, every sequence in $G_U$ has a subsequence which converges locally uniformly on $U$ to some non constant limit function, then $U$ is a Siegel disk and this implies $U$ is  simply connected.
\end{theorem}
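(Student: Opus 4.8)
The plan is to fix elements of $G_U$ one at a time and reduce the statement to the classical classification of invariant Fatou components of a single transcendental entire function, applied to the action of $G_U$ on $U$. First note that $G_U$ is a sub-semigroup of $G$: if $g,h\in G_U$ then $g(h(U))\subset g(U)\subset U$, so $g\circ h\in G_U$; in particular every iterate $g^n$ of a fixed $g\in G_U$ lies in $G_U$, and each $g\in G_U$ restricts to a holomorphic self-map $g\colon U\to U$. Applying the hypothesis to the sequence $(g^n)_{n\in\N}$ yields a subsequence $(g^{n_k})$ converging locally uniformly on $U$ to a non-constant holomorphic function $\phi$.

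Second, I would transfer this to the single function $g$. Since $F(G)\subset F(g)$, the component $U$ lies in a component $V$ of $F(g)$, and $g(U)\subset U\subset V$ forces $g(V)\subset V$, so $V$ is an invariant Fatou component of the transcendental entire function $g$. As $V\subset F(g)$, a further subsequence of $(g^{n_k})$ converges locally uniformly on $V$; by the identity theorem its limit agrees with $\phi$ on $U$ and is therefore a non-constant holomorphic limit function of the iterates of $g$ on $V$. By the classification of invariant Fatou components of transcendental entire functions (see \cite{berg1}), a non-constant limit function excludes $V$ being an attracting or parabolic basin or a Baker domain, so $V$ is a Siegel disk of $g$; in particular $V$ is simply connected and $g|_V$ is conformally conjugate to an irrational rotation $z\mapsto e^{2\pi i\theta}z$ of $\D$.

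Third, I would push this down to $U$. The standard accumulation argument applied to $(g^{n_k})$ — passing to a subsequence so that $m_k:=n_{k+1}-n_k\to\ity$ and using the non-constancy of $\phi$ to conclude $g^{m_k}\to\mathrm{id}$ locally uniformly on $U$ — shows that $g|_U$ is injective and $g(U)=U$, i.e.\ $g$ is a conformal automorphism of $U$. Transporting this to the linearizing coordinate of $V$, the image $\ti U\subset\D$ of $U$ is mapped onto itself by the irrational rotation through $\theta$, hence is invariant under all rotations of $\D$; being open and connected it is either a round disk or a round annulus centred at $0$. In the disk case $U$ is conformally a disk, hence simply connected; carrying out the same analysis for every $g\in G_U$ shows that the closure of $\{g|_U:g\in G_U\}$ in $\mathrm{Aut}(U)$ is a non-trivial compact group acting by irrational rotations, so that $U$ is a Siegel disk of $G$.

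I expect the annular alternative to be the main obstacle: a priori $U$ need only be a rotation-invariant subdomain of the Siegel disk $V$, and one must show it contains the Siegel centre, i.e.\ that $\ti U$ is a disk rather than an annulus $\{r_1<|z|<r_2\}$ with $r_1>0$. The strengthened relation $g(U)=U$ (rather than merely $g(U)\subset U$) is needed for this; beyond it, an annular $\ti U$ would carry its inner boundary circle, under the linearization, to a non-trivial $g$-invariant real-analytic Jordan curve contained in $J(G)\cap V$, whereas $V\subset F(g)$, so such points of $J(G)$ could only be produced by elements of $G$ other than the iterates of $g$. Deriving a contradiction from this configuration is the step I would expect to require the most care, and it is here that finiteness of the generating set and the structure $J(G)=\overline{\bigcup_{g\in G}J(g)}$ (see \cite[Theorem 4.2]{poon1}) should enter.
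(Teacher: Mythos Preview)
Your reduction coincides with the paper's: fix $g\in G_U$, note $U$ lies in an invariant component $V\subset F(g)$, and use the classical classification together with the existence of a non-constant limit of $(g^n)$ on $U$ to force $V$ to be a Siegel disk of $g$. The paper does exactly this, citing the classification of periodic components of $F(G)$ from \cite[Section~3]{dinesh1} and \cite[p.~64]{Hua} to eliminate the attracting, parabolic and Baker cases.

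Where you diverge is in the final step. The paper's proof is one sentence: from ``$U$ is contained in a Siegel disk $U^g\subset F(g)$ for every $g\in G_U$'' it simply concludes ``this implies $U$ must itself be a Siegel disk of $F(G)$ and therefore simply connected,'' with no further argument. It does not engage with the annular alternative you raise, nor does it establish $g(U)=U$ or invoke the structure of $J(G)$. In other words, the obstacle you have carefully isolated---ruling out that $U$ corresponds to a rotation-invariant annulus inside the linearizing disk---is not addressed in the paper; the conclusion is asserted by appeal to the semigroup classification in \cite{dinesh1}. So either the definition of ``Siegel disk of $F(G)$'' adopted there makes the implication tautological (in which case simple connectivity is the substantive claim, and it is not argued), or the paper's own proof has the same gap you have identified. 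Your third paragraph already contains more than the paper offers toward resolving it; you should be aware, however, that the published argument will not help you close the annulus case.
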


\begin{proof}
As $U\subset F(G)$ is an invariant component, $U$ is periodic (of period $1$). From classification of periodic components of $F(G)$ \cite[Section 3]{dinesh1}, for all $g\in G_U,$ $U$ is either contained in an attracting domain, or a parabolic domain, or a  Siegel disk or a Baker domain of $F(g).$ As the sequence $\{g^n\},\,g\in G_U$ has a non constant limit function on $U,$ therefore by normality, $U$ must be contained in a Siegel disk $U^g\subset F(g)$ for all $g\in G_U$ (the other possibilities are ruled out, see  \cite[p.\ 64]{Hua}). This implies $U$ must itself be a Siegel disk of $F(G)$ and therefore simply connected.
\end{proof}

We now deal with conjugate semigroups. Recall that two entire functions $f$ and $g$ are conjugate if there exist a conformal map $\phi:\C\to\C$ with $\phi\circ f=g\circ\phi.$ By a conformal map $\phi:\C\to\C$ we mean an analytic and univalent map of the complex plane $\C$ that is exactly of the form $az+b,$ for some non zero $a.$
\begin{definition}\label{sec4,defn1}
Two finitely generated semigroups $G$ and $G'$  are said to be conjugate under a conformal map $\phi:\C\to\C$ if 
\begin{enumerate}
\item\ they have same number of generators,
\item\ corresponding generators are conjugate under $\phi.$
\end{enumerate}
If $G=[g_1,\ldots,g_n]$ is a finitely generated transcendental semigroup, we represent the conjugate semigroup $G'$ of $G$ by $G'=[\phi\circ g_1\circ\phi^{-1},\ldots,\phi\circ g_n\circ\phi^{-1}],$ where $\phi:\C\to\C$ is the conjugating map. Furthermore, if $G$ is abelian and each of its generators $g_i, 1\leq i\leq n,$ are of bounded type, then so is $G'.$
\end{definition}
If $f$ and $g$ are two rational functions which are conjugate under some Mobius transformation $\phi:\ti\C\to\ti\C$, then it is well known \cite[p.\ 50]{beardon}, $\phi(J(f))=J(g).$ Let $f$ be an entire function of bounded type which is conjugate under the conformal map $\phi$ to an entire function $g.$ Then $g$ is also of bounded type and moreover,  $\phi(\overline{I(f)})=\overline{I(g)}.$ We now show the closures  of $I(G)$ and $I(G')$ are similar for a finitely generated abelian transcendental semigroup in which each generator is of bounded type.

\begin{theorem}\label{sec3,thm11}
Let $G=[g_1,\ldots, g_n]$ be a finitely generated abelian transcendental semigroup in which each $g_i, 1\leq i\leq n$ is of bounded type and let $G'=[\phi\circ g_1\circ\phi^{-1},\ldots,\phi\circ g_n\circ\phi^{-1}]$ be the conjugate semigroup, where $\phi:\C\to\C$ is the conjugating map. Then $\phi(\overline{I(G)})=\overline{I(G')}.$
\end{theorem}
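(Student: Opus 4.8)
The plan is to reduce the statement about semigroups to the already-quoted fact about single functions, namely that if $f$ is of bounded type and conjugate to $g$ via a conformal map $\phi$, then $\phi(\overline{I(f)})=\overline{I(g)}$. First I would recall from Theorem \ref{sec2,thm10} (and its proof) that for the abelian bounded-type semigroup $G$ we have $\overline{I(G)}=\overline{I(g_i)}$ for every generator $g_i$; this uses \cite[Theorem 5.9]{dinesh1} to get $J(G)=J(g_i)$, Theorem \ref{sec2,thm5}(ii) to get $J(G)=\overline{I(G)}$, and $J(g_i)=\overline{I(g_i)}$ from \cite{el2}. By Definition \ref{sec4,defn1}, $G'$ is again a finitely generated abelian transcendental semigroup with each generator $\phi\circ g_i\circ\phi^{-1}$ of bounded type, so the same chain of equalities gives $\overline{I(G')}=\overline{I(\phi\circ g_i\circ\phi^{-1})}$ for each $i$.

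Next I would fix one generator, say $g_1$, which is of bounded type and conjugate to $g_1'=\phi\circ g_1\circ\phi^{-1}$ under the conformal map $\phi$. Applying the single-function result quoted just before the theorem, $\phi(\overline{I(g_1)})=\overline{I(g_1')}$. Combining this with the two identifications from the previous paragraph yields
\[
\phi(\overline{I(G)})=\phi(\overline{I(g_1)})=\overline{I(g_1')}=\overline{I(G')},
\]
which is exactly the asserted equality. So the proof is essentially a three-line concatenation of results already available in the excerpt.

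The only genuine point requiring care is justifying the single-function fact $\phi(\overline{I(f)})=\overline{I(g)}$, but since the excerpt states it as known ("it is well known… $\phi(\overline{I(f)})=\overline{I(g)}$"), I am entitled to cite it. If one wanted to be self-contained, the argument is that a conformal (affine) map $\phi(z)=az+b$ commutes appropriately with iteration — $\phi\circ f^n = g^n\circ\phi$ — so $z\in I(f)\iff\phi(z)\in I(g)$, giving $\phi(I(f))=I(g)$, and since $\phi$ is a homeomorphism of $\C$ it carries closures to closures, hence $\phi(\overline{I(f)})=\overline{\phi(I(f))}=\overline{I(g)}$. I do not expect any real obstacle here; the main thing to be careful about is that Theorem \ref{sec2,thm5}(ii) and Theorem \ref{sec2,thm10} genuinely apply to $G'$, which they do because conjugation by an affine map preserves both "bounded type" and "abelian", as recorded in Definition \ref{sec4,defn1}.
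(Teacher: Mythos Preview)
Your proposal is correct and follows essentially the same route as the paper: reduce $\overline{I(G)}$ and $\overline{I(G')}$ to $\overline{I(g_i)}$ and $\overline{I(g_i')}$ via \cite[Theorem 5.9]{dinesh1} together with Theorem~\ref{sec2,thm5}(ii), then invoke the single-function conjugacy fact $\phi(\overline{I(g_i)})=\overline{I(g_i')}$ to conclude. The only cosmetic difference is that you route the first step through Theorem~\ref{sec2,thm10} rather than citing those two results directly, and you add an optional self-contained justification of the single-function fact.
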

\begin{proof}
Denote for each $i, 1\leq i\leq n,\,\phi\circ g_i\circ\phi^{-1}$ by $g_i'.$ From \cite[Theorem 5.9]{dinesh1} and Theorem \ref{sec2,thm5}, $\overline{I(G)}=\overline{I(g_i)},\, 1\leq i\leq n.$ Moreover, as $G'$ is abelian and each of its generators are of bounded type, using the same argument again  implies $\overline{I(G')}=\overline{I(g_i')},\, 1\leq i\leq n.$ Thus
\begin{equation}
\begin{split}
\notag
\phi(\overline{I(G)})
&=\phi(\overline{I(g_i)})\\
&=\overline{I(g_i')}\\
&=\overline{I(G')}.\qedhere
\end{split}
\end{equation}
\end{proof}

Acknowledgements. We are thankful to Prof. Kaushal Verma, IISc Bangalore for his valuable comments and suggestions. We are also grateful to the referee's  diligence and helpful suggestions to enhance the quality of the paper.


\end{document}